\newtheorem{thm}{Theorem}[section]
\newtheorem{cor}[thm]{Corollary}
\newtheorem{lem}[thm]{Lemma}
\newtheorem{prop}[thm]{Proposition}
\theoremstyle{definition}
\newtheoremstyle{boldremark}
  {10pt}  
  {10pt}   
  {}      
  {}       
  {\bfseries} 
  {.}      
  { }     
  {}  
\theoremstyle{boldremark}
\newtheorem{Remark}[thm]{\textbf{Remark}}
\numberwithin{equation}{section}
\newenvironment{mathclass}
  {Mathematics Subject Classification (2020):}
\newcommand{\R}{\mathbb{R}}
\newcommand{\C}{\mathbb{C}}
\renewcommand{\keywords}[1]{%
  \par\noindent
  Keywords: #1
  \par
}
\def\R {\mathbb{R}}
\newcommand{\be}{\begin{equation}}
\newcommand{\ee}{\end{equation}}
\newcommand{\bea}{\begin{eqnarray}}
\newcommand{\eea}{\end{eqnarray}}
\newcommand{\Bea}{\begin{eqnarray*}}
\newcommand{\Eea}{\end{eqnarray*}}
\newcommand{\bt}{\begin{Theorem}}
\newcommand{\et}{\end{Theorem}}
\newcommand{\bpr}{\begin{Proposition}}
\newcommand{\epr}{\end{Proposition}}
\newcommand{\bl}{\begin{Lemma}}
\newcommand{\el}{\end{Lemma}}
\newcommand{\bi}{\begin{itemize}}
\newcommand{\ei}{\end{itemize}}
\newtheorem{Definition}{Definition}[section]
\newtheorem{Theorem}[Definition]{Theorem}
\newtheorem{Lemma}[Definition]{Lemma}
\newtheorem{Proposition}[Definition]{Proposition}
\title%[Inhomogeneous NLS in $M^{p,q}$]
{Low-Regularity Global solution of the inhomogeneous nonlinear Schr\"odinger equations\\ in modulation spaces  }
\author{Divyang G. Bhimani}
\address{Divyang G. Bhimani\\Department of Mathematics\\Indian Institute of Science Education and Research\\ Pune 411008\\India}
\email{divyang.bhimani@iiserpune.ac.in}
\author{Diksha Dhingra}
\address{Diksha Dhingra \\Department of Mathematics\\ Indian Institute of Technology\\ Indore, 452020\\ India}
\email{dikshadd1996@gmail.com}
\author{Vijay Kumar Sohani}
\address{Vijay Kumar Sohani\\ Department of Mathematics\\ Indian Institute of Technology\\ Indore 452020\\ India}
\email{vsohani@iiti.ac.in}
\begin{document}
\date{}
\maketitle{}
\begin{center}
DIVYANG G. BHIMANI, DIKSHA DHINGRA\footnote{Corresponding author: Diksha Dhingra.}, VIJAY KUMAR SOHANI
\end{center}
%\makecontent{}
%\subjclass[2000]{Primary ; Secondary }
\begin{abstract}
The study of low regularity Cauchy data for nonlinear dispersive PDEs has successfully been achieved using modulation spaces $M^{p,q}$ in recent years. In this paper, we study the inhomogeneous nonlinear Schr\"odinger equation (INLS) 
\[iu_t + \Delta u\pm |x|^{-b}|u|^{\alpha}u=0\ \ (b, \alpha> 0) \]
 on whole space $\mathbb R^n$ in modulation spaces. In the  subcritical regime $(0<\alpha< \frac{4-2b}{n}),$ we establish local well-posedness  in $L^{2}+M^{\alpha+2,\frac{\alpha+2}{\alpha+1}}( \supset L^2 + H^s \ \text{for} \ s>\frac{n\alpha}{2(\alpha+2)}).$ By adapting Bourgain’s high-low decomposition method, we establish global well-posedness in $M^{p,\frac{p}{p-1}}$ with $2<p$ and $p$ sufficiently close to 2. This is the first global well-posedness result for INLS on modulation spaces, which  contains certain Sobolev  $H^s$  $(0<s<1)$ and  $L^p_s-$Sobolev spaces.
\end{abstract}
\footnotetext{\begin{mathclass}
Primary 35Q55, 35Q60, 42B37; Secondary 35A01.
\end{mathclass}
\keywords{inhomogeneous nonlinear Schr\"odinger equation, global well-posedness, Bourgain's high-low decomposition method, modulation spaces.}
}

\section{Introduction}
\subsection{Background}\label{bcg} We study the inhomogeneous nonlinear Schr\"odinger equation (INLS for short) of the following form
\begin{equation}\label{INLS}
\begin{cases}
    iu_t(x,t) + \Delta u(x,t)+\mu |x|^{-b}(|u|^{\alpha}u)(x,t)=0\\ u(x,0)=u_0(x)
\end{cases}(x , t ) \in \mathbb R^n \times \mathbb R, 
\end{equation}
where $u(x,t) \in \mathbb C,  \mu = \pm 1,\; \alpha,  \; b>0 $ and $\Delta $ is a Laplacian operator. In this paper, we assume that $0<b< \min \{2, n \}$ unless it is explicitly specified. The parameters $\mu = 1$ (resp. $\mu = -1$) corresponds to the focusing (resp. defocusing) case.

The classical nonlinear Schr\"odinger equation (NLS for short, $b=0$ case)  has been extensively studied over the past three decades.  See \cite{TaoBook, linares2020, cazenave2003semilinear, wang2011harmonic}. This appeared  in nonlinear optics \cite{gill2000optical, liu1994laser} and  Bose-Einstein condensate  (BEC) \cite{condensed_matter_paper}. %This equation naturally arises in fields such as nonlinear optics and plasma physics, particularly for the propagation of laser beams in inhomogeneous media.
%The inhomogeneous nonlinear Schr\"{o}dinger equation of the form
%\begin{equation}\label{INLSk}
% iu_t + \Delta u=K(x)|u|^{\alpha}u.
%\end{equation}
%plays a crucial role as a limiting case in the study of INLS \eqref{INLS}, where the potential \( K(x) \) behaves asymptotically as \( |x|^{-b} \) as \( |x| \to \infty \). 
The NLS governing beam propagation does not support stable high power propagation in a homogeneous bulk medium. In \cite{gill2000optical,liu1994laser,rotating}, it was proposed that stable high power propagation could be achieved in plasma by sending a preliminary laser beam to create a channel with reduced electron density. This channel mitigates the nonlinear effects within it, thereby allowing for stable propagation of the high-power beam. Considering these conditions, the beam propagation can be modeled by the inhomogeneous NLS of the form
\begin{equation}\label{INLSk}
 iu_t + \Delta u+K(x)|u|^{\alpha}u=0.
\end{equation}
Here $u$ is the electric field in optics, $\alpha  > 0 $ is the power of nonlinear interaction, and the potential $K(x)$ is proportional to the electron density. INLS \eqref{INLS} plays an important role as a limiting equation in the analysis of \eqref{INLSk} with $K(x) \sim  |x|^{-b}$ as $|x|  \rightarrow  \infty$; see \cite{genoud2008schrodinger}. INLS \eqref{INLS} has also sparked significant theoretical interest on nonlinear phenomena in BEC with spatially inhomogeneous interactions; see \cite{spatially1}.
 Due to its vast applications, in recent years, the theory of well-posedness,  scattering,  blow-up  analysis, etc.  for INLS \eqref{INLS} has been  studied extensively. See e.g. \cite{guzman2020,genoud2008schrodinger,genoud2010bifurcation,genoud2012inhomogeneous,an2021local,an2021small, JMPAMS,Farahguzman,JMNA,GuzmanMurphy, JMSIAM, LuizMathZ, dinh2021long, Dinhradial, Merle, CarlesINLS}. 
\\
\par{Formally, the solution to  INLS \eqref{INLS} conserves both mass and energy:
\begin{align}
M[u(t)] &= \int_{\mathbb{R}^n} |u(x, t)|^2 \, dx = M[u_0], \label{mass}\\
 E[u(t)] &= \int_{\mathbb{R}^n} |\nabla u( x,t)|^2 dx- \frac{\mu}{\alpha + 2}  \left|\left||x|^{-b}|u|^{\alpha+2}\right|\right|_{L^{1}_{x}}= E[u_0].\nonumber
\end{align}}
 If $u=u(x,t)$ is a solution of INLS \eqref{INLS}, then so is $u_{\lambda}(x,t)= \lambda^{\frac{2-b}{\alpha}} u (  \lambda x,\lambda^2 t)$ (scaling)
 also a solution of INLS \eqref{INLS} with initial data $u_{\lambda}(x,0)=\lambda^{\frac{2-b}{\alpha}} u (  \lambda x,0).$ Computing homogeneous Sobolev norm we  have
\[ \|u_{\lambda}(\cdot,0)\|_{\dot{H}^s}= \lambda^{s- \frac{n}{2}+ \frac{2-b}{ \alpha}} \|u_0\|_{\dot{H}^s}.\]
 The $\dot{H}^{s}-$norm is invariant under the above scaling  for $s=s_{b}=\frac{n}{2}- \frac{2-b}{ \alpha}$ (called critical Sobolev index).  We say  INLS \eqref{INLS} is 
\begin{equation*}
   L^2  \ (\dot{H}^1)  -\begin{cases}
        \text{subcritical} & if  \  s_b<0 \ (s_b=1)\\
        \text{critical} & if \ s_b=0 \ (s_b=1)\\
        \text{supercritical} & if \  s_b>0 \ (s_b>1).
    \end{cases}
\end{equation*}

We have mass critical or $L^{2}$-critical case if $s_{b}=0 \;(\alpha=\frac{4-2b}{n}).$ If $ s_{b}=1\;(\alpha=\frac{4-2b}{n-2}),$ we have energy critical or $\dot{H}^{1}-$ critical case. Finally, the problem is known to be mass-supercritical and energy-subcritical or intercritical if
$0 < s_{b} < 1.$ For $b=0$, we write $s_{0}=s_{c}.$  To state some known results, we set  notations:
$$\alpha_{s} =\begin{cases}
\frac{4 - 2b}{n - 2s} & \text{if } 0 \leq s < \frac{n}{2} \\
+\infty & \text{if } s \geq \frac{n}{2}
\end{cases}, \quad  \alpha_{s}^{*} =\begin{cases}
\frac{4 - 2b}{n - 2s} & \text{if } s < \frac{n}{2} \\
+\infty & \text{if } s =\frac{n}{2}
\end{cases},$$
$$\Tilde{2}=\begin{cases}
\frac{n}{3} & \text{if } n = 1, 2, 3 \\
2 & \text{if } n \geq 4
\end{cases} ~\text{and}~ \quad \hat{2}=\begin{cases} 
\min\left\{2, 1 + \frac{n - 2s}{2}\right\} & \text{if } n \geq 3 \\
n - s & \text{if } n = 1, 2.
\end{cases}$$
Now, we briefly summarize  some known  results of INLS \eqref{INLS}:
\begin{itemize}
    \item[-] Genoud and Stuart \cite[Theorem 1.1]{genoud2008schrodinger} proved:
\begin{itemize}
    \item locally well-posed (LWP for short) in $H^1$ for $0<\alpha < \alpha_1,  \mu=-1.$
    \item globally well-posed (GWP for short) for small data in $H^1$ when $ \alpha_0 \leq \alpha < \alpha_1, \mu=-1$.
    \item GWP for any data in $H^1$  for $0< \alpha< \alpha_0, \mu=-1.$
    \item GWP in $H^1$ for $\mu=1$ (see \cite{cazenave2003semilinear}).
\end{itemize}

\item[-] Guzm\'an \cite[Theorems 1.4, 1.8 and 1.9]{guzman2020} proved:
\begin{itemize}
    \item LWP in \( H^s \) for \( \max\{0,s_{b}\} < s \leq \min\left\{1, \frac{n}{2}\right\} \), \( 0 < b < \tilde{2} \), and \( 0 < \alpha < \alpha_{s}^{*} \).
    \item GWP for small data   in $H^s$ for \( s_{b} < s \leq \min\left\{1, \frac{n}{2}\right\} \), \( 0 < b < \tilde{2} \), \( \frac{4-2b}{n} < \alpha < \alpha_{s}^{*}\). 
    \item GWP in  $L^2$ for $0<\alpha< \frac{4-2b}{n}$. (For $b=0,$ see \cite[Theorem 1.1.]{YTsutsumi})
\end{itemize}
\item[-] JinMyoung An et al. \cite[Theorems 1.4 and 1.6]{an2021local}, \cite[Theorem 1.3]{an2021small}  expanded the range for \( s \) and \( 0 < b < \hat{2} \). Specifically,  INLS \eqref{INLS} is:
\begin{itemize}
    \item LWP in \( H^s \) \( \left( 0 \leq s < \min\{n, \frac{n}{2}+1\} \right) \) for \( 0 < \alpha < \alpha_{s} \). 
    \item GWP for small  data  in $H^s$  \( \left( 0 < s < \min\left\{n, \frac{n}{2} + 1\right\} \right) \) for  \( \alpha_{0} < \alpha < \alpha_{s} \).
\end{itemize}

\item[-]  Genoud \cite[Theorem 1] {genoud2012inhomogeneous} proved focusing mass-critical INLS \eqref{INLS} is GWP in \( H^1\) having data below ground state. This result was
extended to the intercritical case in \cite[Theorem 1.5]{farah2016global}.  See \cite{Dinhradial} and \cite{dinh2021long}.   We also refer the reader to \cite{JMSIAM} for the global behaviour of solutions to 3D focusing INLS \eqref{INLS} at the mass-energy threshold.
%\textcolor{magenta}{\item[-] The latter result was further extended to the radial data in \cite{Dinhradial} and to the nonradial data in \cite{dinh2021long}.}

\item[-] Murphy \cite{JMPAMS} studied scattering below the ground state for the intercritical non-radial case in $H^1.$ See \cite{Farahguzman,JMNA,GuzmanMurphy} for more results on scattering.
On the other hand, for  blow-up analysis we refer (among others) to papers  by  Merle \cite{Merle} and Banica-Carles- Duyckaerts \cite{CarlesINLS}  and Cardoso-Farah \cite{LuizMathZ}.
\end{itemize}

Finally, taking these known results into account, in the next remark we would like to  highlight the following  points:
\begin{Remark}\label{sr}
\begin{enumerate}
    \item The  problem of GWP for large data in \( H^s \) with \( 0 < s < 1 \) remains unsolved.
    \item Most authors  studied INLS \eqref{INLS} in the $L^2-$based Sobolev spaces. It is natural to investigate similar theory  in $L_s^p-$Sobolev spaces.
\end{enumerate}
\end{Remark}
We shall see soon that modulation spaces will provide us an appropriate framework for the  concern raised in Remark \ref{sr}.
\subsection{Modulation spaces}
\label{mop} In the past two decades, \( M^{p,q}_s \)-spaces have been extensively explored as spaces for low regularity Cauchy data in the study of dispersive PDEs. See \cite{ Bhimani2016, Kasso2009,  KassoBook, BhimaniNorm, BhimaniHartree-Fock,bhimani2023mixed, BhiStroIll, Wang2006, Wang2007,RWZexe, LeonidIn, wang2011harmonic, leonidthesis}. \par{Let $\rho: \mathbb R^n \to [0,1]$  be  a smooth function satisfying   $\rho(\xi)= 1 \  \text{if} \ \ |\xi|_{\infty}\footnote{Define $|\xi|_{\infty}=\max\{ | \xi_i | : \xi= (\xi_1,..., \xi_n)\}.$}\leq \frac{1}{2} $ and $\rho(\xi)=
0 \  \text{if} \ \ |\xi|_{\infty}\geq  1$. Let  $\rho_k$ be a translation of $\rho,$ that is
$ \rho_k(\xi)= \rho(\xi -k) \ (k \in \mathbb Z^n).$
Denote 
$$\sigma_{k}(\xi)= \frac{\rho_{k}(\xi)}{\sum_{l\in\mathbb Z^{n}}\rho_{l}(\xi)}\quad (k \in \mathbb Z^n).$$
The frequency-uniform decomposition operators can be  defined by 
$$\square_k = \mathcal{F}^{-1} \sigma_k \mathcal{F} \quad (k \in \mathbb Z^n)$$ 
where $\mathcal{F}$ and $\mathcal{F}^{-1}$ denote the Fourier and inverse Fourier transform respectively.  
The weighted modulation spaces  $M^{p,q}_s \ (1 \leq p,q \leq \infty, s \in \R)$ is defined as follows:
  \begin{equation*}
    M^{p,q}_s= M^{p,q}_s(\R^n)= \left\{ f \in \mathcal{S}'(\R^n): \left|\left|f\right|\right|_{M^{p,q}_s}=  \left\| \|\square_kf\|_{L^p_x} (1+|k|)^{s} \right\|_{\ell^q_k}< \infty  \right\} . 
\end{equation*}
For $s=0,$ we write $M^{p,q}_0= M^{p,q}.$ See Remark \ref{edm}. For $p=q=2,$ modulation spaces coincide with Sobolev spaces, i.e. 
$M^{2,2}_s= H^s \ (s \in \R). $ For $p\in [1, \infty]$,  we denote $p'$ the H\"older conjugate, i.e.  $\frac{1}{p}+\frac{1}{p'}=1.$ 
By Lemma \ref{srp} \eqref{srp1} and \eqref{srp2}, we have 
\begin{equation}\label{use}
   H^s \subset  M^{\alpha+2, (\alpha+2)'}   \text{ for}  \ s > \frac{n\alpha}{2(\alpha+2)}. 
\end{equation}
Recall  $L^p_s-$Sobolev norm: $\|f\|_{L_s^p}= \| \mathcal{F}^{-1} \langle \cdot \rangle^s \mathcal{F}f\|_{L^p}$. By Proposition \ref{exa}, for $s > n \left( \frac{1}{p'}-\frac{1}{p}\right)$ and $ p \in [2,  \infty],$ we have
\begin{equation}\label{se}
  L^p_s\hookrightarrow  M^{p,p'} \hookrightarrow  L^p.  
\end{equation}
In fact, modulation spaces accommodate rougher Cauchy data compared to any fractional Bessel potential space. See e.g \cite[Chapter 6]{KassoBook}, \cite[Chapter 6]{wang2011harmonic}.
 The free Schr\"odinger propagator $e^{-it\Delta}: M^{p,q} \to M^{p,q}$ is a bounded operator for all $p, q$, see Proposition \ref{mpq}, while it is unbounded in $L^p_{s}-$ Sobolev spaces for $p\neq 2.$  Another great advantage is that $e^{-it\Delta}$ enjoys a truncated decay in modulation spaces, but its decay from $L^{p'}$ to $L^p$ contains singularity at $t=0$.   See    Proposition \ref{mpq}.   Using these facts,
 Wang and Hudzik in  their seminal work in  \cite{Wang2007}  
 proved NLS ($b=0, \alpha \in 2 \mathbb N$) is GWP for small data in $M^{2,1}$. This enables us to consider the lower-regularity initial data in modulation spaces
  for a class of data out of the critical Sobolev spaces $H^{s_c}$.  See Proposition \ref{exa}. 
Later taking algebra property (Lemma \ref{srp}  \eqref{srpa}) into account, in    \cite{Bhimani2016, Kasso2009},  it is proved that  NLS is LWP  in $M^{p,1}_s \ (1\leq p  \leq \infty, s\geq 0)$ and in $M^{p,q}_{s}\ (1\leq p, q \leq \infty, s>\frac{n}{q'})$ via fixed point argument.   Guo \cite{guo20171d} proved  1D cubic NLS is LWP in 
$M^{2,p} \ (2\leq p < \infty)$ and  later  Oh-Wang \cite{oh2018global}  established global existence  for this result. Chaichenets et al. in \cite{LeonidIn,leonidthesis} established  global well-posesness for cubic NLS in $M^{p, p'}$ for $p$ sufficiently close to 2.

On the other hand,  Bhimani et al. in \cite{BhimaniHartree-Fock, bhimani2023mixed} established GWP in  $M^{p,q}\cap L^2$ for Hartree equation (and also for Hartree-Fock eq.),  i.e. NLS with nonlinearity  $H_b(u)=(|\cdot|^{-b}\ast |u|^2)u$.  The key  step in their work is to get the trilinear estimate for $H_b(u)$ and  this was possible due to the regularizing effect of convolution. However,  it seems their method does not apply to treat nonlinearity $|\cdot|^{-b}|u|^{\alpha}u$ (due to  just multiplication by singular potential).  We also  mention  that, for $M^{p,q}_s$ with some negative regularity $(s<0)$,    strong ill-posedness theory (norm inflation with infinite loss of regularity) is developed   for NLS and Hartree equations  in  \cite{BhimaniNorm, BhiStroIll}.

In spite of these progresses   and ongoing interest for adapting modulation spaces in dispersive PDEs,  there are no results for  INLS \eqref{INLS} so far. In view of this and the   aforementioned  discussion of the previous section (see Remark \ref{sr}), we are inspire to study INLS \eqref{INLS} in modulation spaces. 

\subsection{Main results} We are now ready to state  LWP result in the following theorem. To this end, we denote  
\begin{equation}\label{bforglobal}
\tilde{b}=
      \begin{cases}
          \frac{3-\sqrt{7}}{2} \quad &\text{if} \quad n=1 \\
          2-\sqrt{2} \quad &\text{if} \quad n=2\\
          \frac{n+6-\sqrt{(n+6)^{2}-32}}{4} &\text{if} \quad n\geq 3.
      \end{cases}
\end{equation}
\begin{thm}[Local well-posedness]\label{lwp} Let $0<\alpha<\frac{4-2b}{n},$ $\tilde{b}$ be as in \eqref{bforglobal} and
\begin{eqnarray*}
 \begin{cases}
     0<b\leq \tilde{b} \quad if \  n \neq 2\\
     0<b< \tilde{b}  \quad  if  \  n=2.
 \end{cases}   
\end{eqnarray*}
  Assume that $u_{0}\in L^2+M^{\alpha+2,(\alpha+2)'}.$ Then there exists $T^*=T^*(\|u_{0}\|_{L^2+M^{\alpha+2,(\alpha+2)'}},n,\alpha)>0$ and a unique maximal  solution $u$ of INLS \eqref{INLS} such that 
    \begin{equation*}
    u\in C([0,T^*),L^2)~\cap~ L^{\frac{4(\alpha+2)}{n\alpha}}([0,T^*),L^{\alpha+2}) ~+~C([0,T^*),M^{\alpha+2,(\alpha+2)'}).
     \end{equation*}
     Moreover, 
     \begin{enumerate}
     \item  (Blow-up alternative) If  $T^*<\infty,$ then  $\|u(\cdot,t)\|_{L^2+M^{\alpha+2,(\alpha+2)'}} 
     \to \infty$ as $t \to T^{*}$.
      \item (Lipschitz continuity)
The mapping $u_0\mapsto u(t)$ is locally Lipschitz from $L^2+M^{\alpha+2,(\alpha+2)'}$ to $C([0,T'],L^2)\cap L^{\frac{4(\alpha+2)}{n\alpha}}([0,T'],L^{\alpha+2}) +C([0,T'],M^{\alpha+2,(\alpha+2)'})$ for $T'<T^{*}.$
     \end{enumerate}
      \end{thm}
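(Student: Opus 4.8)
The plan is to realise the solution as a fixed point of the Duhamel map
\[
\Phi(u)(t)=e^{-it\Delta}u_{0}+i\mu\int_{0}^{t}e^{-i(t-s)\Delta}\big(|x|^{-b}|u|^{\alpha}u\big)(s)\,ds
\]
on the sum space
\[
Y_{T}=\Big(C([0,T],L^{2})\cap L^{q}([0,T],L^{\alpha+2})\Big)+C([0,T],M^{\alpha+2,(\alpha+2)'}),\qquad q=\tfrac{4(\alpha+2)}{n\alpha},
\]
equipped with the infimum norm over such decompositions; here $(q,\alpha+2)$ is an $L^{2}$-admissible Schr\"odinger pair with $q>\alpha+2$ because $\alpha<\tfrac{4-2b}{n}<\tfrac4n$. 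Writing $C_{T}X=C([0,T],X)$ and $L^{q}_{T}X=L^{q}([0,T],X)$, the first step is the linear estimate: decomposing $u_{0}=v_{0}+w_{0}$ with $v_{0}\in L^{2}$, $w_{0}\in M^{\alpha+2,(\alpha+2)'}$, Strichartz gives $\|e^{-it\Delta}v_{0}\|_{C_{T}L^{2}\cap L^{q}_{T}L^{\alpha+2}}\lesssim\|v_{0}\|_{L^{2}}$ while Proposition \ref{mpq} gives $\|e^{-it\Delta}w_{0}\|_{C_{T}M^{\alpha+2,(\alpha+2)'}}\lesssim\|w_{0}\|_{M^{\alpha+2,(\alpha+2)'}}$, so taking the infimum over decompositions $\|e^{-it\Delta}u_{0}\|_{Y_{T}}\lesssim\|u_{0}\|_{L^{2}+M^{\alpha+2,(\alpha+2)'}}$. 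The entire Duhamel term will be placed in the $L^{2}$-Strichartz component of $Y_{T}$; this is legitimate because $M^{\alpha+2,(\alpha+2)'}\hookrightarrow L^{\alpha+2}$ by \eqref{se}, whence $\|u\|_{L^{q}_{T}L^{\alpha+2}}\lesssim\|u\|_{Y_{T}}$ on a finite interval (gaining a harmless $T^{1/q}$ on the modulation part), and the inhomogeneous (dual) Strichartz inequality, for the admissible pairs arising below, sends the nonlinearity back into $C_{T}L^{2}\cap L^{q}_{T}L^{\alpha+2}$.

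The crux is then the nonlinear estimate
\[
\big\|\,|x|^{-b}|u|^{\alpha}u\,\big\|_{L^{a'}_{T}L^{\beta'}}\lesssim T^{\theta}\,\|u\|_{L^{q}_{T}L^{\alpha+2}}^{\alpha+1},\qquad \theta>0,
\]
for suitable admissible pairs. I would split $|x|^{-b}=|x|^{-b}\mathbf{1}_{\{|x|\le1\}}+|x|^{-b}\mathbf{1}_{\{|x|>1\}}$. On $\{|x|>1\}$ the weight is bounded and the term is handled exactly as in classical $L^{2}$-subcritical NLS: $\||u|^{\alpha}u\|_{L^{(\alpha+2)'}}=\|u\|_{L^{\alpha+2}}^{\alpha+1}$, and since $q>\alpha+2$ a H\"older-in-time step yields a factor $T^{\theta_{1}}$, $\theta_{1}>0$, with source pair $(q,\alpha+2)$. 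On $\{|x|\le1\}$ one applies H\"older in $x$,
\[
\big\||x|^{-b}\mathbf{1}_{\{|x|\le1\}}|u|^{\alpha+1}\big\|_{L^{\beta'}}\le\big\||x|^{-b}\big\|_{L^{d}(|x|\le1)}\,\|u\|_{L^{\alpha+2}}^{\alpha+1},\qquad \tfrac1{\beta'}=\tfrac1d+\tfrac{\alpha+1}{\alpha+2},
\]
where local integrability of $|x|^{-b}$ forces $bd<n$, hence $\beta'<(\alpha+2)'$, i.e.\ $\beta>\alpha+2$, so the source pair $(a,\beta)$ is admissible with $a<q$; a further H\"older-in-time step needs $a'(\alpha+1)<q$ to produce $T^{\theta_{2}}$, $\theta_{2}>0$. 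The requirement that one can choose $d$ and an admissible pair $(a,\beta)$ (subject to the endpoint restrictions in dimension $n=2$) satisfying simultaneously the admissibility relation $\tfrac2a=n(\tfrac12-\tfrac1\beta)$, the H\"older identity, $bd<n$, and $a'(\alpha+1)<q$ is a finite system of linear inequalities that reduces to a quadratic condition on $b$, solvable precisely when $b\le\tilde b$ (with strict inequality for $n=2$) as in \eqref{bforglobal}. I expect this exponent bookkeeping near the origin to be the main obstacle; the rest is comparatively soft.

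Granting the nonlinear estimate, and feeding the pointwise bound $\big||x|^{-b}|u|^{\alpha}u-|x|^{-b}|\tilde u|^{\alpha}\tilde u\big|\lesssim|x|^{-b}(|u|^{\alpha}+|\tilde u|^{\alpha})|u-\tilde u|$ into the same H\"older scheme, one obtains
\[
\|\Phi(u)\|_{Y_{T}}\le C\|u_{0}\|_{L^{2}+M^{\alpha+2,(\alpha+2)'}}+CT^{\theta}\|u\|_{Y_{T}}^{\alpha+1},\quad \|\Phi(u)-\Phi(\tilde u)\|_{Y_{T}}\le CT^{\theta}\big(\|u\|_{Y_{T}}^{\alpha}+\|\tilde u\|_{Y_{T}}^{\alpha}\big)\|u-\tilde u\|_{Y_{T}}.
\]
Choosing $R\simeq\|u_{0}\|_{L^{2}+M^{\alpha+2,(\alpha+2)'}}$ and then $T$ so small that $CT^{\theta}R^{\alpha}<\tfrac12$ makes $\Phi$ a contraction on the closed ball of radius $2CR$ in $Y_{T}$, giving the local solution in the asserted class; patching solutions on consecutive intervals produces the maximal existence time $T^{*}$ and a maximal solution $u$. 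Since the existence time depends only on $\|u_{0}\|_{L^{2}+M^{\alpha+2,(\alpha+2)'}}$ (and the fixed parameters), the blow-up alternative follows: if $T^{*}<\infty$ and $\|u(\cdot,t)\|_{L^{2}+M^{\alpha+2,(\alpha+2)'}}$ stayed bounded along some $t_{j}\uparrow T^{*}$, one could restart from $u(\cdot,t_{j})$ with a uniform existence time and continue past $T^{*}$, a contradiction. The locally Lipschitz dependence $u_{0}\mapsto u$ on $[0,T']$ with $T'<T^{*}$ comes from the difference estimate on a short interval followed by a standard covering/iteration argument, and uniqueness in the full class $C_{T}L^{2}\cap L^{q}_{T}L^{\alpha+2}+C_{T}M^{\alpha+2,(\alpha+2)'}$ is obtained by the usual bootstrap from uniqueness in the ball; finally the decomposition $u_{0}=v_{0}+w_{0}$ is immaterial by uniqueness, so $u$ is well defined on $L^{2}+M^{\alpha+2,(\alpha+2)'}$.
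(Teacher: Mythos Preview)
Your proposal is correct and follows essentially the same route as the paper: Duhamel fixed point on the sum space $X(T)=X_1(T)+X_2(T)$, linear estimate via Strichartz plus Proposition~\ref{mpq}, the Duhamel term placed entirely in $X_1(T)$ using $M^{\alpha+2,(\alpha+2)'}\hookrightarrow L^{\alpha+2}$, and the nonlinearity handled by the $B/B^c$ splitting with H\"older and admissible pairs (the paper's Lemma~\ref{lemlwp}); your condition $a'(\alpha+1)<q$ is exactly the paper's $\tfrac{1}{q_1}>0$, and the admissibility constraint on $\beta$ near the origin is what produces the quadratic restriction $b\le\tilde b$ (strict for $n=2$), as in Remark~\ref{rho2}. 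The contraction, blow-up alternative, and Lipschitz continuity are handled identically.
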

 Up to now we cannot solve INLS \eqref{INLS} in $L^p_s-$Sobolev spaces or in $L^p$ but in $M^{p, p'}$ for $p \neq 2$. See Remark \ref{sr}, \eqref{use}  and \eqref{se}. Thus, Theorem \ref{lwp} complements the points mentioned in Remark \ref{sr}. 
\begin{Remark}
     We briefly mention key ideas to prove Theorem \ref{lwp}.
\begin{enumerate}
    \item[-]
     By utilizing the boundedness of the operator $e^{it\Delta}$ in modulation spaces (Proposition \ref{mpq}), we treat  the linear part.
    \item[-]  The main difficulty is to handle the spatially decaying factor $|\cdot|^{-b}$ in the nonlinearity as $|\cdot|^{-b}$ does not belong to any $L^{p}$ spaces. We handle this by decomposing $\mathbb R^n=B \cup B^{c}$ into two parts.  Specifically,  
    \begin{equation*}
    \begin{cases}
       |\cdot|^{-b}\in L^{\gamma}(B) & if  \  \frac{n}{\gamma} - b > 0\\
       |\cdot|^{-b}\in L^{\gamma}(B^c) & if  \  \frac{n}{\gamma} - b < 0.
    \end{cases}
    \end{equation*}
\item[-] This introduces additional complexity as we need to select different admissible pairs  for the regions inside the ball i.e. in $B$ and outside the ball i.e. in $B^c$. The restriction on $\alpha$ and $b$ comes due to Lemma \ref{lemlwp}, see Remark \ref{rho2}.
\item[-] We invoke  Strichartz estimates (Theorem \ref{SE1}) to run the fixed point argument via Banach contraction principle.  To this end, we used embedding $M^{p, p'}\subset L^p,$ which also justifies the selection of the exponent $p = \alpha + 2$.
\item[-]We shall see that the life span of the solution $T$ depends on the size of the initial data $u_{0}$ in the $L^2+M^{\alpha+2,(\alpha+2)'}$ norm since the problem is ``subcritical''. 
\end{enumerate}   
\end{Remark}

The local solution established in Theorem \ref{lwp} can be extended to a global one under certain restriction on exponent $p$. 
To this end, we denote
\begin{equation*}\label{pmax}p_{\max} := 
\begin{cases}
 \frac{4\alpha+8-n\alpha}{2\alpha+2+b+\frac{n(4-2b-n\alpha)}{2(\alpha+2)(n+2-b)}}   \quad &\text{if} \  \alpha-\frac{n\alpha^2}{4(\alpha+2)}-\frac{4-2b-n\alpha}{4}+\frac{n(4-2b-n\alpha)}{4(\alpha+2)(n+2-b)}>0 \\ \alpha+2 \quad  & \text{otherwise}.  
\end{cases}
\end{equation*}}
We are now ready to state GWP result in the next theorem.

\begin{thm}[Global well-posedness]\label{gwp}  Let $0<\alpha<\frac{4-2b}{n}, \tilde{b}$ be as in \eqref{bforglobal} and
\begin{eqnarray*}
 \begin{cases}
     0<b\leq \tilde{b} \quad if \  n \neq 2\\
     0<b< \tilde{b}  \quad  if  \  n=2.
 \end{cases}   
\end{eqnarray*} Assume that  $u_{0}\in M^{p,p'}$ for $p\in (2,p_{max}).$ Then  INLS \eqref{INLS} has a unique solution $u$ satisfying 
\begin{equation*}
    u\in C(\R,L^2) \cap L^{\frac{4(\alpha+2)}{n\alpha}}_{loc}(\R,L^{\alpha+2})+ C(\R,M^{\alpha+2,(\alpha+2)'}).
\end{equation*}
\end{thm}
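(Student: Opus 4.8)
The plan is to promote the local solution of Theorem~\ref{lwp} to a global one by a Bourgain-type high--low decomposition, in the spirit of the cubic-NLS treatment of Chaichenets et al. By the time-reversal symmetry $u(x,t)\mapsto\overline{u(x,-t)}$ it suffices to treat $t\ge 0$, and by the blow-up alternative of Theorem~\ref{lwp} it is enough to prove, for each fixed $T>0$, the a priori bound $\sup_{t\in[0,T]}\|u(\cdot,t)\|_{L^2+M^{\alpha+2,(\alpha+2)'}}<\infty$; then the maximal solution extends past $T$, hence is global, and uniqueness together with the claimed regularity are inherited from Theorem~\ref{lwp}.

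First I would decompose the datum and dispose of its bulk. The restriction $p\in(2,p_{\max})$ (with $p$ close to $2$) is precisely what embeds $M^{p,p'}$ into $L^2+M^{\alpha+2,(\alpha+2)'}$: one uses $M^{p,p'}\hookrightarrow L^p$ from \eqref{se} and then an amplitude/frequency splitting trading this $L^p$-control for $L^2$-control of the moderate frequencies and $M^{\alpha+2,(\alpha+2)'}$-control of the rest, and the bookkeeping here is what pins down $p_{\max}$. Writing $u_0=g+\psi_0$ with $g\in L^2$ and $\psi_0\in M^{\alpha+2,(\alpha+2)'}$, split further $g=\phi_0+g'$ with $\phi_0:=\sum_{|k|\le N}\square_k g\in H^1$ (of possibly large norm, $\|\phi_0\|_{H^1}\lesssim N\|g\|_{L^2}$) and $g'\in L^2$, for a fixed $N\ge1$; $\phi_0$ is the ``low part''. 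Since $0<\alpha<\frac{4-2b}{n}$, INLS \eqref{INLS} is mass-subcritical --- the weighted term $\||x|^{-b}|u|^{\alpha+2}\|_{L^1}$ obeys a Gagliardo--Nirenberg bound strictly subquadratic in $\|\nabla u\|_{L^2}$ --- so mass \eqref{mass} and energy conservation bound the $H^1$-norm along the flow, and \eqref{INLS} with datum $\phi_0$ has a global solution $\phi\in C(\R,H^1)$ with $\Lambda_0:=\sup_{t}\|\phi(t)\|_{H^1}<\infty$ and, by Sobolev embedding and the Strichartz estimates (Theorem~\ref{SE1}), $\phi\in L^q_{loc}(\R,L^{\alpha+2})$, $q=\frac{4(\alpha+2)}{n\alpha}$, with $\|\phi\|_{L^q([0,T],L^{\alpha+2})}\lesssim\langle T\rangle^{1/q}\Lambda_0<\infty$. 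In contrast with the classical $H^s$, $0<s<1$, situation, subcriticality makes this bound unconditional, so the low part needs no further energy control.

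Now set $w:=u-\phi$, so that $w(0)=g'+\psi_0$ and
\[
iw_t+\Delta w+\mu|x|^{-b}\bigl(|\phi+w|^{\alpha}(\phi+w)-|\phi|^{\alpha}\phi\bigr)=0,
\]
and write $w=e^{it\Delta}\psi_0+z$. By Proposition~\ref{mpq}, $e^{it\Delta}\psi_0\in C(\R,M^{\alpha+2,(\alpha+2)'})\hookrightarrow C(\R,L^{\alpha+2})$ with $\|e^{it\Delta}\psi_0\|_{L^\infty([0,T],M^{\alpha+2,(\alpha+2)'})}\lesssim\langle T\rangle^{c_0}\|\psi_0\|_{M^{\alpha+2,(\alpha+2)'}}$, while $z(0)=g'\in L^2$ and $z$ solves the Duhamel equation whose nonlinearity $\mathcal M$ satisfies $|\mathcal M|\lesssim(|\phi|+|e^{it\Delta}\psi_0|+|z|)^{\alpha}(|e^{it\Delta}\psi_0|+|z|)$ --- every term carrying a factor of $e^{it\Delta}\psi_0$ or of $z$. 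Running the fixed-point machinery of Theorem~\ref{lwp} --- the $|x|^{-b}$-Strichartz estimates of Theorem~\ref{SE1}, the splitting $\R^n=B\cup B^c$ with $|x|^{-b}\in L^{\gamma}(B)$ for $\frac n\gamma>b$ and $|x|^{-b}\lesssim1$ on $B^c$, and $M^{\alpha+2,(\alpha+2)'}\hookrightarrow L^{\alpha+2}$ --- one obtains, on any subinterval $I\subset[0,T]$, an estimate schematically of the form (with $Y_I:=C(I,L^2)\cap L^q(I,L^{\alpha+2})$)
\[
\|z\|_{Y_I}\le\|z(\inf I)\|_{L^2}+C\,|I|^{\theta}\bigl(\Lambda_0+\langle T\rangle^{c_0}\|\psi_0\|_{M^{\alpha+2,(\alpha+2)'}}+\|z\|_{Y_I}\bigr)^{\alpha}\bigl(\langle T\rangle^{c_0}\|\psi_0\|_{M^{\alpha+2,(\alpha+2)'}}+\|z\|_{Y_I}\bigr),
\]
where $\theta=\theta(n,\alpha,b)>0$ is the subcritical time-gain and the coefficient $1$ in front of $\|z(\inf I)\|_{L^2}$ reflects the unitarity of $e^{it\Delta}$ on $L^2$. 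One then partitions $[0,T]$ into finitely many short intervals $I_j$ --- their number $M$ depending only on $T$, $\Lambda_0$ and $\|\psi_0\|_{M^{\alpha+2,(\alpha+2)'}}$ --- on which $C|I_j|^{\theta}(\Lambda_0+\langle T\rangle^{c_0}\|\psi_0\|)^{\alpha}$ is below a fixed structural threshold, so the estimate closes by a contraction/bootstrap on each $I_j$; propagating it gives a recursion $\|z(t_{j+1})\|_{L^2}\le(1+c_1)\|z(t_j)\|_{L^2}+c_2\langle T\rangle^{c_0}\|\psi_0\|_{M^{\alpha+2,(\alpha+2)'}}$ with $c_1,c_2$ structural, hence, since $z(0)=g'$, $\sup_{[0,T]}\|z(t)\|_{L^2}\le(1+c_1)^{M}\bigl(\|g'\|_{L^2}+c_2M\langle T\rangle^{c_0}\|\psi_0\|_{M^{\alpha+2,(\alpha+2)'}}\bigr)<\infty$. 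Therefore $\|u(t)\|_{L^2+M^{\alpha+2,(\alpha+2)'}}\le\|\phi(t)+z(t)\|_{L^2}+\|e^{it\Delta}\psi_0\|_{M^{\alpha+2,(\alpha+2)'}}<\infty$ on $[0,T]$, which is the desired a priori bound, and the decomposition $u=(\phi+z)+e^{it\Delta}\psi_0$ exhibits $u\in C(\R,L^2)\cap L^q_{loc}(\R,L^{\alpha+2})+C(\R,M^{\alpha+2,(\alpha+2)'})$.

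The principal difficulty is the remainder estimate of the third paragraph: controlling $z$ in the $L^2$-based Strichartz space uniformly on $[0,T]$ even though the high datum $\psi_0$ lives only in a modulation space (not in $L^2$) and the potential $|x|^{-b}$ belongs to no Lebesgue space, and checking that the exponents forced on the inhomogeneous Strichartz inequalities by the $B/B^c$ splitting remain compatible with the Lebesgue exponent $p$ supplied by $M^{p,p'}\hookrightarrow L^p$ --- this compatibility is exactly what produces the threshold $p<p_{\max}$. One must also verify that the per-step errors in the iteration sum to a finite quantity over the finitely many intervals of the partition, and that the decomposition of $u_0$ in the second paragraph can genuinely be carried out for all $p$ in the stated range.
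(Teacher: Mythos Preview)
Your iteration has a genuine gap. You partition $[0,T]$ into intervals $I_j$ whose length is fixed so that $C|I_j|^\theta(\Lambda_0+\langle T\rangle^{c_0}\|\psi_0\|)^\alpha$ falls below a structural threshold, and then claim that $M$ depends only on $T,\Lambda_0,\|\psi_0\|$. But your own schematic estimate carries the superlinear term $C|I_j|^\theta\|z\|_{Y_{I_j}}^{\alpha+1}$, and once the recursion has pushed $\|z(t_j)\|_{L^2}$ past $\Lambda_0+\|\psi_0\|$ this term is no longer absorbed by your choice of $|I_j|$: the bootstrap on $I_j$ simply does not close. Repairing this forces $|I_j|\lesssim\|z(t_j)\|_{L^2}^{-\alpha/\theta}$; meanwhile your recursion $\|z(t_{j+1})\|_{L^2}\le(1+c_1)\|z(t_j)\|_{L^2}+c_2\|\psi_0\|$ (with $c_1>0$ structural) makes $\|z(t_j)\|_{L^2}$ grow geometrically, so $\sum_j|I_j|$ is a convergent geometric series and you cannot reach an arbitrary $T$. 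The extra Littlewood--Paley cut $g=\phi_0+g'$ does not help here, since neither $\|g'\|_{L^2}$ nor $\|\psi_0\|_{M^{\alpha+2,(\alpha+2)'}}$ is made small.

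The missing idea is that the modulation piece must be \emph{small}, and this is exactly what produces $p_{\max}$. The paper invokes the interpolation Lemma~\ref{ipt}: for each $N>1$ one writes $u_0=\phi_0+\psi_0$ with $\|\phi_0\|_{L^2}\lesssim N^\beta$ and $\|\psi_0\|_{M^{\alpha+2,(\alpha+2)'}}\lesssim N^{-1}$, where $\beta=\frac{1/2-1/p}{1/p-1/(\alpha+2)}$. The bulk $\phi_0$ is evolved by Guzm\'an's $L^2$-GWP (no $H^1$ detour), and the smallness $\|\psi_0\|\sim N^{-1}$ makes the per-step increment of the $L^2$-mass \emph{additive} of order $T^{n\alpha/4(\alpha+2)}N^{-1}$ (Corollary~\ref{winfty2}), not multiplicative. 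Hence $\|\phi_k\|_{L^2}$ stays of order $N^\beta$ for $K$ steps, and balancing $K$ against the step size $T\sim N^{-\alpha\beta/\zeta}$ dictated by \eqref{c2} gives $KT\gtrsim N^{1+\beta(1-\alpha(1-\frac{n\alpha}{4(\alpha+2)})/\zeta)}$ as in \eqref{Npower}. The condition $p<p_{\max}$ is precisely the positivity of this exponent, so that $KT\to\infty$ as $N\to\infty$; it has nothing to do with the embedding $M^{p,p'}\subset L^2+M^{\alpha+2,(\alpha+2)'}$ (which holds for every $p\in[2,\alpha+2]$) or with Strichartz-exponent compatibility.
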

 Theorem \ref{gwp}  is the first GWP result for INLS \eqref{INLS} in modulation spaces.
 We note that  existing results does not address GWP in \( H^s \) for \( 0 < s < 1 \) in the mass-subcritical case without any restriction on the initial data being small. See Remark \ref{sr}.
  Since $H^s \subset M^{p,p'}$ for some $s\in (0,1)$ (see \eqref{use}), Theorem \ref{gwp} partially addresses this gap and complement known results for INLS \eqref{INLS}.

  In \cite{Bourgain1999},  Bourgain  introduced a general scheme, so called Bourgain's high-low decomposition method to establish GWP for NLS in $H^{s}$
for $s>\frac{3}{5}$. The idea is to split initial data $u_0$ between two suitable function spaces and solve in each of them a different NLS. And then combine the solutions to get a function that solves original problem. See Remark \ref{bsgp}, \cite[Section 3.2]{KenigonBourgain} and \cite[Section 3.9]{TaoBook} for more details. Later, Vargas and Vega \cite{vargas2001global} adapted this method to establish global existence for 1D cubic NLS even if $\|u_0\|_{L^2}=\infty$. See \cite{hyakuna2012existence, Kenig2000} for more related works. Recently, Chaichenets et al. in  \cite[Theorem 3]{LeonidIn} and \cite{leonidthesis} successfully  adapted this method to establish GWP for NLS (i.e. INLS \eqref{INLS} with $b=0$) 
in modulation spaces.
Our method of the proof is inspired by  these results.
\begin{Remark}
    \label{bsgp} We briefly mention key ideas to prove Theorem \ref{gwp}.
   \begin{enumerate}
       \item[-] We split the initial data into two parts using Lemma \ref{ipt}: 
       \[u_0=\phi_0 + \psi_0 \in L^2 + M^{(\alpha+2), (\alpha +2)'}.\]
       We say part in $L^2$ having high frequency in the sense that $\|\phi_0\|_{L^2} \lesssim_{p} N^{\beta}$ for some $\beta>0$ and part in $M^{(\alpha+2), (\alpha +2)'}$ having low frequency in the sense that $\|\psi_0\|_{M^{(\alpha+2), (\alpha+2)'}} \lesssim_{p} N^{-1}.$ See \eqref{dp} and \eqref{asi}.
       \item[-] By  Theorem \ref{lwp}, we get   local existence in  $L^2 + M^{(\alpha+2), (\alpha +2)'}$. Guzm\'an \cite[Theorem 1.8]{guzman2020} proved INLS \eqref{INLS} is GWP  in $L^2.$ This  proof depends on the fact that  the solution enjoys the conservation of mass (stated in \eqref{mass} and \eqref{gwpl2}). However,  there is no known useful conservation law for the solution in terms of $M^{p,q}$-spaces norm that help  us to extend the solution globally in time. 
       \item[-] In order to handle this situation, we construct a solution  $u$ of INLS \eqref{INLS} in the form \[u= (v_{0} + w_{0}) + e^{it\Delta} \psi_0.\] 
       Here $v_{0} $ is the $L^{2}$- global solution of \eqref{INLS} with data $\phi_0$, see \eqref{gwpl2}. While $e^{it\Delta}\psi_0 \in M^{(\alpha +2), (\alpha+2)'}$ for $t\in \R$ is the linear evolution of $\psi_0,$ see Proposition \ref{mpq} and $w_{0}$ is the nonlinear interaction term; see \eqref{w01}. See \eqref{solnlocal} for more details.
       \item[-] Since $\|\psi_0\|_{M^{(\alpha+2), (\alpha+2)'}}\lesssim_{p} N^{-1}$ can be made small, we get $v_{0} + w_{0}$ close to $v_{0}$ in $L^2.$ Consequently, $M(v_{0} + w_{0}),$ although no longer conserved, grows slowly enough to yield  a global solution. See Remark \ref{comments} for more details on the proof of Theorem \ref{gwp}.
   \end{enumerate} 
\end{Remark}
\begin{Remark} The restriction on $p$ essentially  comes due to  our choice of suitable  norm size of the high frequency part of  initial data.  We refer to  Remark \ref{Whypso} for  details.
\end{Remark}

\begin{Remark} Theorem \ref{gwp} may be interpreted as follows: for data   $u_{0} \in M^{p,p'}$,  
the solution $u(\cdot,t)$  lies in a larger modulation spaces 
$$\left(L^2 \cap L^{\alpha +2}\right) + M^{\alpha+2,(\alpha+2)'} \supset M^{p, p'}.$$
We have  this loss of regularity as we rely on decomposition of initial data  and we do not know whether  Schr\"odinger propogator $e^{it \Delta}$ gain any regularity  on $M^{p, q}$ (although it is bounded). See \eqref{solnlocal} and Proposition \ref{mpq}.
\end{Remark}

\begin{Remark}\label{edm} 
In the early 1980s  Feichtinger  \cite{Feih83} introduced a  class of Banach spaces,  the so called modulation spaces, which allow a measurement of space variable and Fourier transform variable of a function or distribution on $\mathbb R^n$ simultaneously using the short-time Fourier transform (STFT).  The  STFT  of a tempered distribution $f \in \mathcal{S}'(\R^n)$ with respect to a window function $0\neq g \in {\mathcal S}(\R^n)$ (Schwartz space) is defined by
 $$V_{g}f(x, \omega) =\int_{\mathbb{R}^n} f(t) \overline{g(t-x)} e^{-2 \pi i \omega. t} \, dt ,$$
whenever the integral exists.  It is known that 
\begin{eqnarray*}
\|f\|_{M^{p,q}_s}\asymp   \left\| \|V_gf(x,\omega)\|_{L^p_x} (1+ |\omega|^2)^{s/2} \right\|_{L_{\omega}^q,}
\end{eqnarray*}
see \cite{Feih83},  \cite[Proposition 2.1]{Wang2007}.
\end{Remark}

\par{    This paper is organised as follows. Preliminaries and notations are introduced in Section \ref{NP}, which will be helpful in the sequel. In Section \ref{seclwp}, we obtain the estimates of the nonlinear term and the proof of the Theorem \ref{lwp} is also discussed. In Section \ref{secgwp}, the proof for Theorem \ref{gwp} is presented.

\section{Notations and Preliminaries}\label{NP}
\noindent
\textbf{Notations}.\noindent The symbol $X \lesssim Y$ means 
 $X \leq CY$ 
for some constant $C>0.$ While $X \approx Y $ means $C^{-1}X\leq Y \leq CX$ for some constant $C>0.$ %\textcolor{red}{The constant
%$C$ is used in various contexts and can symbolize different values or concepts.} 
The symbol $A \hookrightarrow B$ denotes the continuous embedding of the topological linear space $A$ into $B.$  %The characteristic function of a set $I \subset \R^{n}$ is defined as
%$$$\chi_{I}(x) = \left\{ \begin{array}{ll} 1 \quad x\in I, \\ 0 \quad x\notin I. \end{array} \right. $$
 The norm of the space-time Lebesgue spaces $L^{q}([0,T],L^{r}(\R^{n}))$ is defined as
$$\|u\|_{L^{q}_{T}L^{r}}:=\|u\|_{L^{q}([0,T],L^{r}(\R^{n}))}=\left(\int_{0}^{T} \|u(\cdot,t)\|_{L^{r}(\R^{n})}^{q} dt\right)^{\frac{1}{q}}.$$
We simply write $\|u\|_{L^{q}L^{r}}$ in place of $\|u\|_{L^{q}(\R,L^{r}(\R^{n}))}.$
The unit ball centred at $0$ is denoted by $ B = B(0,1)$ and $B^{c}=\R^{n} \setminus B.$\\
\subsection{Basic Properties of \texorpdfstring{$M^{p,q}$-spaces}{M^{p,q}-spaces}}
Let us briefly recall some relevant facts concerning the modulation spaces.
\begin{lem}[\cite{KassoBook,wang2011harmonic}] \label{srp}
    Let $p,q,p_{i},q_{i} \in [1,\infty], (i=0,1,2)$ and $s_1, s_2 \in \R.$ 
  \begin{enumerate}
    %\item $  M^{p,q}(\mathbb{R}^{n})$ is a Banach space.
     \item $ \label{srp1}  M_{s_1}^{p_{1},q_{1}}\hookrightarrow M_{s_2}^{p_{2},q_{2}}$ whenever $p_{1}\leq p_{2}, q_{1}\leq q_{2},  s_2 \leq s_1.$
    \item \label{srp2} $  M_{s_1}^{p,q_{1}}\hookrightarrow M_{s_2}^{p,q_{2}}$ whenever $ q_{2}<q_{1},  s_1-s_2 > \frac{n}{q_2}-\frac{n}{q_1}$. 
    %\item $\mathcal{F}:M^{p,p}(\mathbb{R}^{n}) \to M^{p,p}(\mathbb{R}^{n})$ is an isomorphism.
    %\item The space  $  M^{p,q}(\mathbb{R}^{n})$ is invariant under complex conjugation.\item $\mathcal{S}(\mathbb{R}^{n}) \hookrightarrow M^{p,q}(\mathbb{R}^{n}) \hookrightarrow \mathcal{S'}(\mathbb{R}^{n}) $
     %\item $\mathcal{S}(\mathbb{R}^{n})$ is dense in $  M^{p,q}(\mathbb{R}^{n})$ if $p,q < \infty.$
            \item\label{embedding} $M^{p,q_1}\hookrightarrow L^{p} \hookrightarrow  M^{p,q_2} $ for $q_1 \leq \min \{p, p'\}$ and $q_2 \geq \max \{p, p'\}.$
            
            %\item \label{srpa} If $1 \leq p \leq 2, M^{p,p}(\mathbb{R}^{n}) \hookrightarrow L^{p}(\mathbb{R}^{n}) \hookrightarrow  M^{p,p'}(\mathbb{R}^{n}). $ 
     %\item \label{embedding} If $2 \leq p \leq \infty, M^{p,p'}(\mathbb{R}^{n}) \hookrightarrow L^{p}(\mathbb{R}^{n}) \hookrightarrow M^{p,p}(\mathbb{R}^{n}). $ 
\item \label{srpa}If $\frac{1}{p_{1}}+\frac{1}{p_{2}}=\frac{1}{p_{0}}$ and $\frac{1}{q_{1}}+\frac{1}{q_{2}}=1+\frac{1}{q_{0}},$ then $\|f g\|_ {M^{p_{0},q_{0}}} \lesssim  \|f\|_ {M^{p_{1},q_{1}}} \|g\|_ {M^{p_{2},q_{2}}}.$
   % \item  If $\frac{1}{p_{1}}+\frac{1}{p_{2}}=1+\frac{1}{p_{0}}$ and $\frac{1}{q_{1}}+\frac{1}{q_{2}}=\frac{1}{q_{0}},$ then 
     %$$  M^{p_{1},q_{1}}(\mathbb{R}^{n})*M^{p_{2},q_{2}}(\mathbb{R}^{n})\hookrightarrow M^{p_{0},q_{0}}(\mathbb{R}^{n})$$ 
     %with norm inequality $$\|f* h\|_ {M^{p_{0},q_{0}}} \lesssim \|f\|_ {M^{p_{1},q_{1}}} \|h\|_ {M^{p_{2},q_{2}}}.$$
    \end{enumerate}
    \end{lem}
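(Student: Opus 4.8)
\textbf{Proof proposal for Lemma \ref{srp}.}

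The plan is to reduce each claim to known facts about the short-time Fourier transform (STFT) characterization of modulation spaces (Remark \ref{edm}), or equivalently to the frequency-uniform decomposition $\square_k$, using the comparability $\|f\|_{M^{p,q}_s} \asymp \|\,\|V_g f(x,\omega)\|_{L^p_x} \langle \omega\rangle^s\|_{L^q_\omega}$.

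For \eqref{srp1}, the two monotonicity statements are separate. The inclusion in $q$ (with fixed $p$, $s_1 = s_2$) is immediate from $\ell^{q_1} \hookrightarrow \ell^{q_2}$ when $q_1 \le q_2$, applied to the sequence $\{\|\square_k f\|_{L^p_x}\}_k$; the weight $\langle k\rangle^{s}$ with $s_2 \le s_1$ is handled by $\langle k\rangle^{s_2} \le \langle k\rangle^{s_1}$ pointwise in $k$. The inclusion in $p$ (with fixed $q$) uses that each $\square_k f$ has Fourier support in the unit cube $k + [-1,1]^n$, so by Bernstein's inequality (or Nikolskii's inequality) on a fixed-size frequency box, $\|\square_k f\|_{L^{p_2}_x} \lesssim \|\square_k f\|_{L^{p_1}_x}$ when $p_1 \le p_2$, with a constant independent of $k$ by translation invariance; then take $\ell^q_k$ norms. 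For \eqref{srp2}, the gain comes from summing the weighted sequence: if $s_1 - s_2 > n/q_2 - n/q_1$, write $\|\square_k f\|_{L^p} \langle k\rangle^{s_2} = \big(\|\square_k f\|_{L^p}\langle k\rangle^{s_1}\big)\langle k\rangle^{s_2 - s_1}$ and apply Hölder in $k$ with exponents $q_1/q_2$ and its conjugate; the factor $\langle k\rangle^{(s_2-s_1) \cdot (\text{conjugate exponent})}$ lies in $\ell^1(\mathbb Z^n)$ precisely under the stated strict inequality, since $\sum_k \langle k\rangle^{-\delta n} < \infty$ iff $\delta > 1$.

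For \eqref{embedding}, the two-sided embedding $M^{p,q_1} \hookrightarrow L^p \hookrightarrow M^{p,q_2}$ for $q_1 \le \min\{p,p'\}$ and $q_2 \ge \max\{p,p'\}$ is a well-documented consequence of interpolation/Minkowski-type inequalities between $L^p_x$ and $\ell^q_k$: writing $f = \sum_k \square_k f$, one gets $\|f\|_{L^p} \lesssim \|\,\|\square_k f\|_{L^p}\|_{\ell^{q}}$ for $q \le \min\{p,p'\}$ via the almost-orthogonality of the $\square_k$ together with $\ell^q \hookrightarrow \ell^2$-type summation, and dually $\|\,\|\square_k f\|_{L^p}\|_{\ell^q} \lesssim \|f\|_{L^p}$ for $q \ge \max\{p,p'\}$; I would just cite \cite{KassoBook,wang2011harmonic} here since the statement is standard. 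For \eqref{srpa}, the bilinear (algebra-type) estimate $\|fg\|_{M^{p_0,q_0}} \lesssim \|f\|_{M^{p_1,q_1}}\|g\|_{M^{p_2,q_2}}$ under $\frac1{p_1}+\frac1{p_2} = \frac1{p_0}$ and $\frac1{q_1}+\frac1{q_2} = 1 + \frac1{q_0}$ follows by decomposing $fg = \sum_{k,l}(\square_k f)(\square_l g)$, noting that $(\square_k f)(\square_l g)$ has Fourier support in $k+l+[-2,2]^n$ so that $\square_m(fg)$ only receives contributions from pairs with $|m - k - l|_\infty \lesssim 1$; then Hölder in $x$ gives $\|\square_k f \,\square_l g\|_{L^{p_0}} \le \|\square_k f\|_{L^{p_1}}\|\square_l g\|_{L^{p_2}}$, and the constraint on the $q_i$ is exactly what makes the resulting discrete convolution bounded on the appropriate $\ell^q$ spaces by Young's inequality. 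Again, as this is a classical statement, I would cite \cite{KassoBook,wang2011harmonic,Wang2007} rather than reproduce the computation.

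The only mild obstacle is bookkeeping the finite-overlap arguments for $\square_m(fg)$ in \eqref{srpa} and the uniform-in-$k$ Bernstein constants in \eqref{srp1}; none of this is deep, and since the lemma is quoted verbatim from \cite{KassoBook,wang2011harmonic}, the cleanest route is to state that these are standard and refer to those sources, giving the one-line indication above only for completeness.
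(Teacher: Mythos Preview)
Your proposal is correct, and in fact the paper gives no proof of Lemma \ref{srp} at all: it is stated with the citations \cite{KassoBook,wang2011harmonic} and then used as a black box. Your sketches for each item (Bernstein/Nikolskii for the $p$-monotonicity, $\ell^q$-monotonicity and H\"older in $k$ for \eqref{srp1}--\eqref{srp2}, almost-orthogonality/Minkowski for \eqref{embedding}, and the finite-overlap plus Young's convolution argument for \eqref{srpa}) are the standard arguments found in those references, so your write-up is entirely consistent with, and more informative than, what the paper does.
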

\begin{prop}[examples]\label{exa}Let $1\leq p, q \leq \infty,$ $s_1, s_2 \in \R.$  
\begin{enumerate}
\item \label{e1}  The $M^{p,q}_s-$spaces are invariant under Fourier transform  only when $p=q$. See for e.g. \cite[p. 2084]{Ruzet4}. It follows that $M^{p,p'} \neq H^{s}=M^{2,2}_s$ for $p \neq 2, s\in \mathbb R.$ 
\item \label{e2} For $p>2$ and $s_b<0,$  $H^{s_b}\not\subset M^{p,p'}$. 
\item \label{e3} $M_{s_c}^{2,1}\subset H^{s_c}$ and $M_s^{2,1}\not\subset H^{s_c}$ for $s<s_c=s_0.$  
\item (See \cite{kobayashi2011inclusion}).Denote 
$$\tau (p,q)= \max \left\{ 0, n\left( \frac{1}{q}- \frac{1}{p}\right), n\left( \frac{1}{q}+ \frac{1}{p}-1\right) \right\}.$$Then 
$L^{p}_{s_1} \subset M^{p,q}_{s_2}$ if and only if one of the following conditions is satisfied:
\begin{gather*}
 (i) \  q\geq p>1, s_1\geq s_2 + \tau(p,q); \quad (ii) \ p>q, s_1>s_2+ \tau(p,q);\\
(iii) \ p=1, q=\infty, s_1\geq s_2 + \tau(1, \infty); \quad (iv) \ p=1, q\neq \infty, s_1>s_2+\tau (1, q).
\end{gather*}
\item For completely sharp embedding between $M^{p,q}_{s_1}$ and $H^s,$ see  \cite{kobayashi},  \cite[Theorems 1.3 and 1.4]{kato}.
%\item
%\textcolor{magenta}{Can we say $p>2, M^{p,p'} \not\subset H^{s_b}$? }\textcolor{green}{Ok it might be good idea to justify this?}\textcolor{red}{For $1\leq p\leq 2, M^{p,p'} \subset H^{s_{b}} $ using Lemma \ref{srp}, \eqref{srp1}, \eqref{srp2}.}
\end{enumerate}
 \end{prop}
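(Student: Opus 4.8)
The plan is to dispatch Proposition \ref{exa} item by item, as it is essentially a catalogue of comparisons. Items (4) and (5) are verbatim citations (to \cite{kobayashi2011inclusion} and to \cite{kobayashi,kato}), and the Fourier-invariance dichotomy ``$M^{p,q}_{s}$ is $\mathcal{F}$-invariant only when $p=q$'' in item (1) is quoted from \cite{Ruzet4}; so the content genuinely needing an argument is the consequence ``$M^{p,p'}\neq H^{s}$ for $p\neq 2$'' in (1), together with (2) and (3). All three can be settled using only Lemma \ref{srp} and a couple of explicit examples.

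For (2): since $s_{b}<0$ we have $L^{2}=H^{0}\hookrightarrow H^{s_{b}}$. For $p>2$ one has $p'=\min\{p,p'\}$, so Lemma \ref{srp}\eqref{embedding} gives $M^{p,p'}\hookrightarrow L^{p}$; but $L^{2}\not\subset L^{p}(\R^{n})$ for $p>2$ --- e.g. $f(x)=|x|^{-n/2}(\log(2/|x|))^{-1}$ for $|x|\le 1$ and $f=0$ otherwise lies in $L^{2}\setminus L^{p}$. Hence $L^{2}\not\subset M^{p,p'}$, and therefore $H^{s_{b}}\not\subset M^{p,p'}$.

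For (3): the inclusion $M^{2,1}_{s_{c}}\subset H^{s_{c}}=M^{2,2}_{s_{c}}$ is immediate from Lemma \ref{srp}\eqref{srp1} (it is $\ell^{1}\hookrightarrow\ell^{2}$ with the same weight). For the failure when $s<s_{c}$ I would use a lacunary example: recall $\|g\|_{H^{t}}\approx\|g\|_{M^{2,2}_{t}}=\big(\sum_{k\in\Z^{n}}(1+|k|)^{2t}\|\square_{k}g\|_{L^{2}}^{2}\big)^{1/2}$; pick for each $j\ge 1$ a point $k_{j}\in\Z^{n}$ with $|k_{j}|\approx 2^{j}$, fix a Schwartz function $\phi$ with $\widehat{\phi}$ supported in a small cube about the origin, and set $\widehat{f}=\sum_{j\ge 1}a_{j}\widehat{\phi}(\cdot-k_{j})$ with $a_{j}$ normalised so that $\|\square_{k_{j}}f\|_{L^{2}}=2^{-j s_{c}}$. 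The $k_{j}$ are well separated, so $\square_{k}f=0$ for $k\notin\{k_{j}\}$, and $\widehat{f}$ grows at most polynomially, so $f\in\mathcal{S}'(\R^{n})$. Then $\|f\|_{M^{2,1}_{s}}\approx\sum_{j\ge1}2^{j(s-s_{c})}<\infty$ because $s<s_{c}$, while $\|f\|_{H^{s_{c}}}^{2}\approx\sum_{j\ge1}1=\infty$; hence $f\in M^{2,1}_{s}\setminus H^{s_{c}}$.

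For the consequence in (1): suppose $M^{p,p'}=H^{s}=M^{2,2}_{s}$ for some $p\neq 2$ and some $s\in\R$, and apply $\mathcal{F}$. The Fourier transform carries $M^{p,p'}$ onto a Wiener amalgam space whose local component is $\mathcal{F}L^{p}$, whereas it carries $H^{s}=M^{2,2}_{s}$ onto a weighted $L^{2}$ space, hence one that is locally $L^{2}$; equating the two would force $\mathcal{F}L^{p}=L^{2}_{\mathrm{loc}}$, i.e. $p=2$, a contradiction. (Alternatively, the sharp two-sided embeddings $H^{s}\hookrightarrow M^{p,q}_{s_{1}}$ recorded in item (5) are mutually incompatible unless $p=2$.) This is the only step that calls for care --- one must identify the Fourier image of $M^{p,q}$ correctly and recall that $\mathcal{F}H^{s}$ is a weighted $L^{2}$ space rather than $H^{s}$ when $s\neq0$ --- while (2) and (3) are entirely routine given Lemma \ref{srp}.
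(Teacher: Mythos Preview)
Your treatment of part (2) is exactly the paper's: assume $H^{s_b}\subset M^{p,p'}$, combine $L^2\hookrightarrow H^{s_b}$ (valid since $s_b<0$) with Lemma \ref{srp}\eqref{embedding} to obtain $L^2\subset L^p$, and reach a contradiction. The paper's proof says nothing more than this single line for (2) and, for (3), simply refers the reader to \cite{RWZexe,Wang2006,kobayashi,wang2011harmonic}; it does not argue the ``it follows'' in (1) at all.

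So for (1) and (3) you go beyond the paper rather than diverge from it. Your lacunary example for (3) is correct and standard: placing $\widehat f$ on well-separated frequency blocks $k_j$ with $|k_j|\approx 2^j$ and tuning $\|\square_{k_j}f\|_{L^2}=2^{-js_c}$ gives $\|f\|_{M^{2,1}_s}\approx\sum_j 2^{j(s-s_c)}<\infty$ while $\|f\|_{H^{s_c}}^2\approx\sum_j 1=\infty$. Your argument for (1) via the Fourier image $\mathcal{F}M^{p,q}=W(\mathcal{F}L^p,\ell^q)$ is also sound, but note that this identification and the sharpness of Hausdorff--Young on which you implicitly rely are not among the paper's stated preliminaries; if you were writing for this paper you would either need to add those references or, as you suggest as an alternative, appeal to the sharp two-sided embeddings catalogued in item (5).
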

\begin{proof}
In fact, if $H^{s_b}\subset M^{p,p'}$, then by Lemma \ref{srp}\eqref{embedding}, we have $L^2 \subset L^p,$ which is a contradiction. For part \eqref{e3}, see e.g.  \cite[Section 1]{RWZexe},  \cite[Section 3]{Wang2006},   \cite{kobayashi, wang2011harmonic}.
\end{proof}  
\begin{prop}[See Proposition 4.1 in \cite{Wang2007}, \cite{KassoBook}]\label{mpq} Let $ p,q\in [ 1,\infty]$ and $ s \in \R.$ Denote the 
Schr\"{o}dinger propagator by
\begin{equation*}
    e^{it\Delta}f(x):= \int_{\R^{n}} e^{i\pi t|\xi|^{2}} \hat{f}(\xi) e^{2\pi i  \xi \cdot x}d \xi \quad (f\in \mathcal{S}, t \in \mathbb R).
\end{equation*}
\begin{enumerate}
    \item \label{spe} $\|e^{it\Delta}f\|_{M_s^{p,q}} \lesssim_{n,s}(1+t^2)^{-\frac{n}{2}(\frac{1}{2}-\frac{1}{p}) }\|f\|_{M_s^{p',q}}\quad (2\leq p \leq \infty).$
%    \item  $ \|e^{it\Delta}f \|_{M_s^{p,q}}\leq  (1+|t|)^{ n\left| \frac{1}{2}-\frac{1}{p} \right|} \|f\|_{M_s^{p,q}}.$
    \item  $ \|e^{it\Delta}f \|_{M_s^{p,q}}\lesssim_{n,s} (1+t^{2})^{ \frac{n}{2}\left| \frac{1}{2}-\frac{1}{p} \right|} \|f\|_{M_s^{p,q}}.$
\end{enumerate}
\end{prop}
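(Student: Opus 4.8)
The plan is to reduce both estimates to bounds on a single frequency block and then interpolate. Since $e^{it\Delta}$ is a Fourier multiplier it commutes with every $\square_k$. Fix $\tilde\rho\in C_c^\infty(\R^n)$ with $\tilde\rho\equiv 1$ on $[-1,1]^n$ and $\operatorname{supp}\tilde\rho\subset[-2,2]^n$, and put $\tilde\sigma_k(\xi)=\tilde\rho(\xi-k)$; since $\operatorname{supp}\sigma_k\subset k+[-1,1]^n$ we have $\tilde\sigma_k\sigma_k=\sigma_k$, hence
\[
\square_k e^{it\Delta}f=\eta_k(t,\cdot)*\square_k f,\qquad \eta_k(t,\cdot):=\mathcal F^{-1}\!\big[\tilde\sigma_k(\xi)\,e^{i\pi t|\xi|^2}\big].
\]
Because $\|g\|_{M^{p,q}_s}=\big\|\,(1+|k|)^s\,\|\square_k g\|_{L^p}\,\big\|_{\ell^q_k}$ and the weight $(1+|k|)^s$ appears identically on both sides of the claimed inequalities, it suffices to prove the two uniform-in-$k$ block estimates
\[
\|\square_k e^{it\Delta}f\|_{L^p}\lesssim_n (1+|t|)^{-n(\frac12-\frac1p)}\,\|\square_k f\|_{L^{p'}}\quad(2\le p\le\infty),
\]
\[
\|\square_k e^{it\Delta}f\|_{L^p}\lesssim_n (1+|t|)^{n|\frac12-\frac1p|}\,\|\square_k f\|_{L^{p}}\quad(1\le p\le\infty),
\]
and then multiply by $(1+|k|)^s$ and take $\ell^q_k$ norms; the relation $(1+|t|)\asymp(1+t^2)^{1/2}$ gives the stated form of the constants.

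The analytic core is the pair of uniform kernel bounds $\|\eta_k(t,\cdot)\|_{L^\infty}\lesssim_n (1+|t|)^{-n/2}$ and $\|\eta_k(t,\cdot)\|_{L^1}\lesssim_n (1+|t|)^{n/2}$. For $|t|\le1$ both are immediate, since $\|\eta_k\|_{L^\infty}\le\|\tilde\sigma_k\|_{L^1}\lesssim1$ and $\eta_k(t,\cdot)$ is a modulated translate of a function with Schwartz seminorms bounded uniformly in $k$ and $|t|\le1$. For $|t|\ge1$, the substitution $\xi=\eta+k$ followed by completing the square shows $|\eta_k(t,x)|=|J(t,x+tk)|$ with
\[
J(t,y)=\int_{\R^n}\tilde\rho\!\Big(\zeta-\tfrac{y}{t}\Big)\,e^{i\pi t|\zeta|^2}\,d\zeta ,
\]
so $\|\eta_k(t,\cdot)\|_{L^r}=\|J(t,\cdot)\|_{L^r}$ for every $r$, uniformly in $k$. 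Writing $e^{i\pi t|\zeta|^2}=\prod_{j=1}^n e^{i\pi t\zeta_j^2}$ and applying the one–dimensional van der Corput estimate in each coordinate (second derivative $2\pi t$, against an amplitude whose $\sup$ and $L^1$–derivative norms are bounded uniformly over all translates $\tilde\rho(\cdot-a)$) yields the dispersive bound $|J(t,y)|\lesssim_n|t|^{-n/2}$, uniformly in $y$. On the non–stationary range $|y|\gtrsim|t|$ the phase $\pi t|\zeta|^2+2\pi\zeta\cdot y$ has gradient of size $\gtrsim|y|$ on $\operatorname{supp}\tilde\rho$, so repeated integration by parts upgrades this to $|J(t,y)|\lesssim_{n,N}|t|^{-n/2}\big(1+|y|/|t|\big)^{-N}$ for any $N$; integrating in $y$ gives $\|J(t,\cdot)\|_{L^1}\lesssim|t|^{-n/2}\cdot|t|^n=|t|^{n/2}$. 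I expect this stationary/non–stationary phase step — in particular making all constants uniform in $k$ and in the linear–phase parameter $y/t$ — to be the main obstacle; the rest is bookkeeping.

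To conclude I interpolate. From $\|\eta_k*g\|_{L^\infty}\le\|\eta_k\|_{L^\infty}\|g\|_{L^1}\lesssim(1+|t|)^{-n/2}\|g\|_{L^1}$ together with $\|\eta_k*g\|_{L^2}=\|\tilde\sigma_k e^{i\pi t|\cdot|^2}\widehat g\,\|_{L^2}\le\|g\|_{L^2}$, the Riesz--Thorin theorem on the segment between $(L^1\to L^\infty)$ and $(L^2\to L^2)$ gives $\|\eta_k*g\|_{L^p}\lesssim(1+|t|)^{-\frac n2(1-\frac2p)}\|g\|_{L^{p'}}=(1+|t|)^{-n(\frac12-\frac1p)}\|g\|_{L^{p'}}$ for $2\le p\le\infty$; taking $g=\square_k f$ yields the first block estimate, hence part (1). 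For part (2), combine $\|\eta_k*g\|_{L^\infty}\le\|\eta_k\|_{L^1}\|g\|_{L^\infty}\lesssim(1+|t|)^{n/2}\|g\|_{L^\infty}$ (and the identical $L^1\to L^1$ bound) with the $L^2\to L^2$ bound: interpolating on the segment from $L^2$ to $L^\infty$ gives $L^p\to L^p$ norm $\lesssim(1+|t|)^{\frac n2(1-\frac2p)}=(1+|t|)^{n(\frac12-\frac1p)}$ for $p\ge2$, and symmetrically on the segment from $L^1$ to $L^2$ one gets $\lesssim(1+|t|)^{n(\frac1p-\frac12)}$ for $p\le2$; in both cases the exponent equals $n|\frac12-\frac1p|$. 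Applying these uniform-in-$k$ bounds to $\square_k f$, inserting the weight $(1+|k|)^s$ and taking $\ell^q_k$ norms completes the proof.
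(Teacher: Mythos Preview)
The paper does not prove this proposition; it simply cites \cite{Wang2007} and \cite{KassoBook}. Your argument is the standard one found in those references: reduce to uniform-in-$k$ block estimates via $\square_k e^{it\Delta}f=\eta_k*\square_k f$, obtain $\|\eta_k\|_{L^\infty}\lesssim(1+|t|)^{-n/2}$ and $\|\eta_k\|_{L^1}\lesssim(1+|t|)^{n/2}$ from (non)stationary phase after translating the symbol to the origin, and interpolate with the trivial $L^2\to L^2$ bound. The reasoning is correct; the only cosmetic slip is that in the non-stationary step you quote the phase as $\pi t|\zeta|^2+2\pi\zeta\cdot y$ although you had just completed the square to work with $J(t,y)=\int\tilde\rho(\zeta-y/t)e^{i\pi t|\zeta|^2}d\zeta$, whose phase is $\pi t|\zeta|^2$ alone --- either formulation gives $|\nabla\phi|\gtrsim|y|$ on the amplitude support when $|y|\gtrsim|t|$, so the conclusion is unaffected.
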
}
\begin{lem}[see e.g. Lemma 3.9 in {\cite{leonidthesis}} ] \label{eg} Denote  \begin{equation*} G(u,v,w)=|u+v|^{\alpha}(u+v)-|u+w|^{\alpha}(u+w)
\end{equation*}
for  $\alpha>0$ and $u,v,w\in \C.$ Then 
     $$|G(u,v,w)| \lesssim_{\alpha} (|u|^{\alpha}+|v|^{\alpha}+|w|^{\alpha})|v-w|.
    $$
  \end{lem}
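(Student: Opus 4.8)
The plan is to reduce the whole estimate to a single regularity property of the scalar map
$F\colon\C\to\C$, $F(z)=\abs{z}^{\alpha}z$, regarded as a map on $\R^{2}$. First I would record that $F\in C^{1}(\C)$, together with the gradient bound $\abs{DF(z)}\lesssim_{\alpha}\abs{z}^{\alpha}$ valid for all $z\in\C$. Away from the origin this is a direct computation: writing $z=x+iy$ and $r=\abs{z}$, every first-order partial derivative of the two real components $r^{\alpha}x$, $r^{\alpha}y$ of $F$ is a sum of terms of the shape $r^{\alpha}$ and $\alpha r^{\alpha-2}x_{i}x_{j}$, each of size $\lesssim_{\alpha}r^{\alpha}$ because $\abs{x_{i}x_{j}}\le r^{2}$. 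At $z=0$ one has $\abs{F(z)}=\abs{z}^{\alpha+1}=o(\abs{z})$ since $\alpha>0$, so $DF(0)=0$; continuity of $DF$ at the origin is then immediate from the bound just stated. Hence $F$ is genuinely $C^{1}$ on all of $\C$, and this holds for every $\alpha>0$, in particular in the delicate range $0<\alpha<1$.

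Next I would write $G(u,v,w)=F(u+v)-F(u+w)$ and apply the fundamental theorem of calculus along the straight segment joining $u+w$ to $u+v$:
\[
G(u,v,w)=\int_{0}^{1}DF\bigl((u+w)+t(v-w)\bigr)\,[\,v-w\,]\,dt .
\]
Taking moduli and using the gradient bound from the first step gives
\[
\abs{G(u,v,w)}\;\lesssim_{\alpha}\;\abs{v-w}\int_{0}^{1}\bigl\lvert(u+w)+t(v-w)\bigr\rvert^{\alpha}\,dt .
\]
Since $(u+w)+t(v-w)=u+(1-t)w+tv$ is a convex-type combination, every point of the segment has modulus at most $\abs{u}+\abs{v}+\abs{w}$, so the integral is bounded by $(\abs{u}+\abs{v}+\abs{w})^{\alpha}$. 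Finally, the elementary inequality $(a+b+c)^{\alpha}\le 3^{\alpha}\max\{a,b,c\}^{\alpha}\le 3^{\alpha}(a^{\alpha}+b^{\alpha}+c^{\alpha})$ for $a,b,c\ge0$ yields $\abs{G(u,v,w)}\lesssim_{\alpha}(\abs{u}^{\alpha}+\abs{v}^{\alpha}+\abs{w}^{\alpha})\abs{v-w}$, which is exactly the claim.

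The argument contains no real obstacle; the only point demanding a little care is the $C^{1}$-regularity of $F$ through the origin when $0<\alpha<1$, where the formulas for the partials look singular — one has to observe that the bound $\abs{DF(z)}\lesssim_{\alpha}\abs{z}^{\alpha}$ actually forces $DF$ to vanish at $z=0$, so $F$ is differentiable there with zero derivative and the fundamental theorem of calculus applies on any segment, even one through $0$. An alternative that sidesteps this, should one prefer, is to first prove the two-variable form $\abs{\,\abs{a}^{\alpha}a-\abs{b}^{\alpha}b\,}\lesssim_{\alpha}(\abs{a}^{\alpha}+\abs{b}^{\alpha})\abs{a-b}$ by the same computation and then substitute $a=u+v$, $b=u+w$, noting $\abs{a-b}=\abs{v-w}$ and $\abs{a}^{\alpha}+\abs{b}^{\alpha}\lesssim_{\alpha}\abs{u}^{\alpha}+\abs{v}^{\alpha}+\abs{w}^{\alpha}$; but once $F\in C^{1}(\C)$ is in hand this detour is unnecessary.
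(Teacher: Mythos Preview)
Your argument is correct. The paper does not supply its own proof of this lemma --- it simply cites Lemma~3.9 of \cite{leonidthesis} --- so there is nothing to compare against; the route you take (differentiating $F(z)=|z|^{\alpha}z$ as a $C^{1}$ map on $\R^{2}$, bounding $|DF(z)|\lesssim_{\alpha}|z|^{\alpha}$, and integrating along the segment from $u+w$ to $u+v$) is the standard one and is exactly what one expects to find behind such a citation.
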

  We simply write $G(u,v,0)=G(u,v).$
    \begin{lem}[Interpolation; see Theorem 6.1 (D) in \cite{Feih83} and Proposition 5.1 in {\cite{leonidthesis}}]\label{ipt}
     Let $u\in M^{p,p'}, \ p\in (2, r)$ and $N>0.$ Then there exists $v \in L^{2}$ and $w \in M^{r,r'}$  such that $$u= v + w$$ with the following property:
\begin{gather*}
\begin{cases}
 \|v\|_{L^{2}}\leq C \|u\|
        _{M^{p,p'} }N^{\beta}\\ 
        \|w\|_{M^{r,r'}}\leq C \|u\|_{M^{p,p'}}\frac{1}{N} 
\end{cases}, \ \ where  \   \beta = \frac{\frac{1}{2} - \frac{1}{p}}{\frac{1}{p} - \frac{1}{r}}.
\end{gather*} \end{lem}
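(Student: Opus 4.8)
The plan is to build the decomposition blockwise on the frequency-uniform pieces $\square_k u$ and then split each piece according to whether $\|\square_k u\|_{L^{\alpha+2}}$... no — more precisely, the natural thing is to split in the $\ell^q_k$ index rather than pointwise in space. Since $u \in M^{p,p'}$ with $p \in (2,r)$, the sequence $a_k := \|\square_k u\|_{L^p}$ lies in $\ell^{p'}$. I want to write $u = v + w$ where $v$ collects the "large" frequency blocks and $w$ the "small" ones. Concretely, fix a threshold (to be chosen in terms of $N$) and set $A = \{k : a_k > \lambda\}$, $v = \sum_{k \in A}\square_k u$, $w = \sum_{k \notin A}\square_k u$. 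The point is that on a finite or $\ell^2$-summable index set the $L^p$ norms of the blocks control $L^2$ norms (one uses $M^{p,p'}\hookrightarrow L^p$ from Lemma \ref{srp}\eqref{embedding} together with almost-orthogonality / Bessel-type bounds for the $\square_k$, or equivalently $M^{2,2}=L^2$ and the relation between $\|\cdot\|_{\ell^2}$ and $\|\cdot\|_{\ell^{p'}}$ on sublevel/superlevel sets), while on the complementary set the $M^{r,r'}$ norm is controlled because $r > p$ gives $r' < p'$ and the small blocks are summable in a better exponent after pulling out a power of $\lambda$.

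The key computation is the standard real-interpolation estimate for the superlevel and sublevel sets of a fixed $\ell^{p'}$ sequence. Writing $c = \|u\|_{M^{p,p'}} = \|a_k\|_{\ell^{p'}}$, on $A = \{a_k > \lambda\}$ one has $\sum_{k\in A} a_k^2 \le \lambda^{2-p'}\sum_{k\in A}a_k^{p'} \le \lambda^{2-p'}c^{p'}$ (valid since $p' < 2$), which gives $\|v\|_{L^2}\lesssim \|a_k\mathbf 1_A\|_{\ell^2}\lesssim \lambda^{1-p'/2}c^{p'/2}$; and on $A^c$ one has $\sum_{k\notin A}a_k^{r'} \le \lambda^{r'-p'}\sum a_k^{p'}=\lambda^{r'-p'}c^{p'}$ (valid since $r' > p'$), giving $\|w\|_{M^{r,r'}} = \|a_k\mathbf 1_{A^c}\|_{\ell^{r'}}\lesssim \lambda^{1-p'/r'}c^{p'/r'}$. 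Now choose $\lambda$ so that the second bound equals $c/N$, i.e. $\lambda \sim (c/N)^{r'/(r'-p')}\,c^{-p'/(r'-p')} \cdot(\dots)$; substituting into the first bound and simplifying the exponents yields $\|v\|_{L^2}\lesssim c\,N^{\beta}$ with exactly $\beta = (\tfrac12-\tfrac1p)/(\tfrac1p-\tfrac1r)$ after converting the $p',r'$ arithmetic back to $p,r$ via $\tfrac1{p'}=1-\tfrac1p$ etc. The only thing one must be a little careful about is the passage from $\ell^{q}_k$ control of the block-norms to the genuine $L^2$ (resp.\ $M^{r,r'}$) norm of the partial sums $v,w$: this is where one invokes $M^{2,2}=L^2$ together with the fact that $v$ is itself a legitimate element of $M^{p,p'}$ whose $M^{2,2}$-norm is comparable to $\|a_k\mathbf 1_A\|_{\ell^2}$ — this comparability for a single partial sum (not a general distribution) follows from the definition of the norms and the bounded overlap of the supports of $\sigma_k$, so no deep result is needed.

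The main obstacle, and the step that deserves the most care, is precisely that last comparability: in general $M^{p,p'}\hookrightarrow L^p$ and $L^2 = M^{2,2}$ do not immediately give $\|v\|_{L^2}\lesssim \|\,\|\square_k v\|_{L^p}\,\|_{\ell^2}$ for arbitrary $v$, because changing the spatial exponent from $p$ to $2$ inside the $\ell$-sum is not free. The clean way around it is to note that on each fixed block, $\square_k$ has Fourier support in a unit cube, so Bernstein's inequality gives $\|\square_k v\|_{L^2}\lesssim \|\square_k v\|_{L^p}$ uniformly in $k$ (for $p\ge 2$); then $\|v\|_{L^2}^2 = \|\,\|\square_k v\|_{L^2}\,\|_{\ell^2}^2$ up to constants by almost-orthogonality of the blocks, and we are done. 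Alternatively one cites Lemma \ref{ipt}'s sources (\cite{Feih83}, \cite{leonidthesis}) directly, since this is exactly the Feichtinger interpolation statement; but the self-contained argument above via superlevel sets plus Bernstein is short and makes the explicit value of $\beta$ transparent.
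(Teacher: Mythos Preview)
The paper itself does not prove this lemma; it only cites Feichtinger's interpolation theorem and Chaichenets' thesis. Your blockwise level-set argument has the right flavour but contains a genuine gap.

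First a minor (repairable) point: your superlevel/sublevel assignment is backwards. Since $r>p>2$ we have $r'<p'<2$, not ``$r'>p'$'' as you write; with the correct ordering, the inequality $\sum_{a_k>\lambda}a_k^2\le \lambda^{2-p'}\sum a_k^{p'}$ fails on $A=\{a_k>\lambda\}$ (it would force $a_k\le\lambda$). Swapping the roles --- sending the \emph{small} blocks to $L^2$ and the \emph{large} blocks to $M^{r,r'}$ --- repairs both sequence estimates.

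The real obstruction is your Bernstein step. For a function with Fourier support in a unit cube and $p\ge 2$, Bernstein's inequality reads $\|f\|_{L^p}\lesssim\|f\|_{L^2}$, \emph{not} $\|f\|_{L^2}\lesssim\|f\|_{L^p}$. (Take $f=\sum_{j=1}^N\operatorname{sinc}(\cdot-jM)$ with $M$ large: this is band-limited and satisfies $\|f\|_{L^2}\approx N^{1/2}\gg N^{1/p}\approx\|f\|_{L^p}$.) Hence you cannot pass from control of $a_k=\|\square_k u\|_{L^p}$ to control of $\|\square_k u\|_{L^2}$, and your $\ell^2$-bound for $v$ does not follow. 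The block-index level-set split handles the outer transition $\ell^{p'}\to\ell^2,\ \ell^{p'}\to\ell^{r'}$, but it cannot push the inner Lebesgue exponent from $p$ down to $2$; that is a genuinely two-parameter interpolation.

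A correct route, implicit in the cited references, is to use Feichtinger's complex interpolation $[M^{2,2},M^{r,r'}]_\theta=M^{p,p'}$ with $\theta=\dfrac{1/2-1/p}{1/2-1/r}$ together with the general embedding $[X_0,X_1]_\theta\hookrightarrow (X_0,X_1)_{\theta,\infty}$. This gives $K(t,u;L^2,M^{r,r'})\lesssim t^\theta\|u\|_{M^{p,p'}}$ for every $t>0$; choosing $t=N^{1/(1-\theta)}$ produces the desired split $u=v+w$ with $\|w\|_{M^{r,r'}}\lesssim c/N$ and $\|v\|_{L^2}\lesssim cN^{\theta/(1-\theta)}=cN^{\beta}$, where $\beta=\dfrac{1/2-1/p}{1/p-1/r}$ as claimed.
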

\begin{Definition}\label{apairs}
 We say that a pair $(q,r)$ is {\it admissible} if $$\frac{1}{q} = \frac{n}{2} \left( \frac{1}{2} - \frac{1}{r} \right) \quad
\text{where} \begin{cases} 
2 \leq r \leq \frac{2n}{n-2} & \text{if } n \geq 3, \\
2 \leq r < \infty & \text{if } n = 2, \\
2 \leq r \leq \infty & \text{if } n = 1.
\end{cases}$$
\end{Definition}
The set of all admissible pairs is denoted by $$\mathcal{A}= \{(q,r):(q,r) \; \text{is admissible pair} \}.$$
    \begin{thm}[Strichartz estimates, \cite{linares2020,cazenave2003semilinear,KeelTao1998}]\label{SE1}
Let $f \in L^2,$ $(\Tilde{q},\Tilde{r}) ,(q,r) \in \mathcal{A}$ and $g \in L^{\Tilde{q}'} 
(\R ,\, L^{\Tilde{r}'}  )$.
Then
\Bea \| e^{-it\Delta} f \|_ {L^{q}
L^{r}  }&\lesssim_{n,r}&
\|f \|_{L^{2} } \\
 \left\| \int_0^t e^{-i(t-s)\Delta} g(x,s) ds 
\right\|_{L^{q}  L^{r}  } &\lesssim_{n,r,\Tilde{r}}&  \|g \|_{L^{\Tilde{q}'} L^{\Tilde{r}'}  }.
 \Eea
\end{thm}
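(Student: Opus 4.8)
The plan is to obtain both inequalities from the two elementary properties of the free Schr\"odinger group --- $L^{2}$-isometry and pointwise-in-time dispersive decay --- via the abstract $TT^{*}$ machinery of Keel and Tao.

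\textbf{Step 1 (dispersive estimate).} First I would recall the explicit convolution representation of $e^{-it\Delta}$, which yields $\|e^{-it\Delta}f\|_{L^{\infty}} \lesssim_{n} |t|^{-n/2}\|f\|_{L^{1}}$ for $t \neq 0$; combined with $\|e^{-it\Delta}f\|_{L^{2}} = \|f\|_{L^{2}}$ (Plancherel), Riesz--Thorin interpolation gives $\|e^{-it\Delta}f\|_{L^{r}} \lesssim_{n,r} |t|^{-n(\frac12-\frac1r)}\|f\|_{L^{r'}}$ for every $2 \le r \le \infty$.

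\textbf{Step 2 ($TT^{*}$, non-endpoint case).} Writing $U(t)=e^{-it\Delta}$ so that $U(t)U(s)^{*}=e^{-i(t-s)\Delta}$, the homogeneous estimate $\|U(t)f\|_{L^{q}_{t}L^{r}_{x}}\lesssim\|f\|_{L^{2}}$ is, by duality, equivalent to the boundedness $\big\|\int_{\R}U(t)U(s)^{*}g(s)\,ds\big\|_{L^{q}_{t}L^{r}_{x}}\lesssim\|g\|_{L^{q'}_{t}L^{r'}_{x}}$. Bounding the left-hand side by $\big\|\int|t-s|^{-n(\frac12-\frac1r)}\|g(s)\|_{L^{r'}}\,ds\big\|_{L^{q}_{t}}$ (Minkowski and Step 1) and applying the Hardy--Littlewood--Sobolev inequality closes the estimate exactly when $\frac1q=\frac n2(\frac12-\frac1r)$ with $q>2$. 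This proves the first inequality for all admissible pairs except the $n\ge 3$ endpoint, and the inhomogeneous bound then follows for non-endpoint pairs by combining this $TT^{*}$ estimate with the Christ--Kiselev lemma, which passes from $\int_{\R}$ to the retarded integral $\int_{0}^{t}$.

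\textbf{Step 3 (endpoint).} For $n\ge3$ the pair $\big(2,\tfrac{2n}{n-2}\big)$ is not reachable by Hardy--Littlewood--Sobolev (the relevant convolution exponent degenerates to $L^{1}\to L^{\infty}$), and Christ--Kiselev fails when both time exponents equal $2$; here I would invoke the Keel--Tao theorem directly. Since $\sigma:=\tfrac n2>1$ for $n\ge3$, the triple $\big(2,\tfrac{2n}{n-2},\tfrac n2\big)$ is acceptable --- it is not the excluded $(2,\infty,1)$, which is precisely why $r=\infty$ is forbidden when $n=2$ in Definition~\ref{apairs} --- so the abstract result delivers the homogeneous endpoint estimate together with the full retarded bound $\big\|\int_{s<t}U(t)U(s)^{*}F(s)\,ds\big\|_{L^{q}_{t}L^{r}_{x}}\lesssim\|F\|_{L^{\tilde q'}_{t}L^{\tilde r'}_{x}}$ for all admissible $(q,r),(\tilde q,\tilde r)$; writing $\int_{0}^{t}e^{-i(t-s)\Delta}g(s)\,ds=U(t)\int_{0}^{t}U(s)^{*}g(s)\,ds$ and splitting $\int_{0}^{t}$ into retarded pieces then gives the second inequality.

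\textbf{Main obstacle.} The only genuinely delicate point is the endpoint $q=2$ (equivalently $\tilde q=2$): there the naive $TT^{*}$ plus Hardy--Littlewood--Sobolev scheme must be replaced by the dyadic-in-$|t-s|$ decomposition and the bilinear real-interpolation (Whitney-type summation) argument of Keel--Tao, and the retarded endpoint estimate has to be established within that same framework rather than deduced from Christ--Kiselev. For the purposes of this paper, however, only non-endpoint pairs are actually used (in the relevant range $r=\alpha+2<\tfrac{2n}{n-2}$ when $n\ge3$, and $r=\alpha+2<\infty$ otherwise), so Steps~1--2 already suffice; alternatively one simply cites \cite{KeelTao1998,cazenave2003semilinear,linares2020}.
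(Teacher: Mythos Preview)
The paper does not actually prove this theorem: it is stated as a classical result with citations to \cite{linares2020,cazenave2003semilinear,KeelTao1998} and no argument is supplied. Your sketch is a correct and standard proof outline (dispersive decay, $TT^{*}$ with Hardy--Littlewood--Sobolev in the non-endpoint case, Keel--Tao for the endpoint), and in fact you give more than the paper does; your closing remark that one may simply cite the references is exactly the route the authors take.
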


\section{Local well-posedness in $L^{2}+M^{\alpha+2,(\alpha+2)'}$ }\label{seclwp}
In this section, we shall prove Theorem \ref{lwp}. To this end, we shall briefly introduce some notations and prove Lemma \ref{lemlwp}.

Consider the Banach space $X(T)$ expressed as 
\begin{equation}
\begin{aligned}\label{X(T)}
        X(T)&:= X_{1}(T)+X_{2}(T)
\end{aligned}
\end{equation}
where $$X_1(T):=C([0,T],L^2) ~\cap~ L^{\frac{4(\alpha+2)}{n\alpha}}([0,T],L^{\alpha+2})$$
equipped with the norm 
$$\|v\|_{X_1(T)}=\max \left\{\|v\|_{L^{\infty}_{T}L^2},\|v\|_{L^{\frac{4(\alpha+2)}{n\alpha}}_{T}L^{\alpha+2}}\right\} $$ and $$ X_2(T):=C([0,T],M^{\alpha+2,(\alpha+2)'}).$$\\
The norm on $X(T)$ is given as 
\begin{equation*}
\|u\|_{X(T)} =\inf_{\substack{u=v+w \\ v \in X_1(T) \\ w \in X_2(T)}} \left(\|v\|_{X_1(T)} + \|w\|_{X_2(T)} \right).
\end{equation*}
Denote
\begin{equation}\label{YT}
 Y(T) := L^{\frac{4(\alpha+2)}{n\alpha}}([0,T],L^{\alpha+2}).
\end{equation}

 The  following lemma  provides an estimate for the nonlinearity $|x|^{-b} |u|^{\alpha} u$ which will play a crucial role in our analysis. 
\begin{lem}\label{lemlwp}
    Let $0<\alpha<\frac{4-2b}{n},~\tilde{b}$ be as in \eqref{bforglobal} and
\begin{eqnarray*}
 \begin{cases}
     0<b\leq \tilde{b} \quad if \  n \neq 2\\
     0<b< \tilde{b}  \quad  if  \  n=2.
 \end{cases}   
\end{eqnarray*}Then
$$\inf_{(\gamma,\rho)\in \mathcal{A}}\| ~|x|^{-b}|u|^{\alpha}v\|_{L^{\gamma'}_{T}L^{\rho '}} \lesssim  ( T^{1-\frac{n\alpha}{4}}+ T^{\frac{4-2b-n\alpha}{4}-\frac{n(4-2b-n\alpha)}{4(\alpha+2)(n+2-b)}})\|u\|_{Y(T)}^{\alpha}\|v\|_{Y(T)}.$$
   
\end{lem}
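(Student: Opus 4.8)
The plan is to split the physical space into the unit ball $B$ and its complement $B^{c}$, writing $|x|^{-b}|u|^{\alpha}v = |x|^{-b}\chi_{B}|u|^{\alpha}v + |x|^{-b}\chi_{B^{c}}|u|^{\alpha}v$, and to estimate each piece in a space-time norm $L^{\gamma'}_{T}L^{\rho'}$ corresponding to a \emph{different} admissible pair $(\gamma,\rho)\in\mathcal{A}$. On $B$ the weight is locally integrable precisely when $n/\theta_{1} - b > 0$, while on $B^{c}$ it lies in $L^{\theta_{2}}$ precisely when $n/\theta_{2} - b < 0$; this dichotomy, already announced in the introduction, is what forces us to treat the two regions separately. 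In each region I would apply H\"older's inequality in the spatial variable to peel off $|x|^{-b}\chi_{B}$ (resp.\ $|x|^{-b}\chi_{B^{c}}$) in an appropriate $L^{\theta}$, leaving a product $|u|^{\alpha}v$ to be measured in a Lebesgue exponent that I will arrange, via a second H\"older application in $x$ distributing $\alpha+1$ factors, to be exactly $L^{\alpha+2}$ raised to the power $\alpha+1$ — this is the reason $\alpha+2$ is the natural exponent and matches the space $Y(T)$.

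Next I would handle the time integration. After the spatial H\"older step each term looks like $\||x|^{-b}\chi_{\Omega}\|_{L^{\theta}} \cdot \| |u|^{\alpha}v \|_{L^{\alpha+2}}$ pointwise in $t$, and taking $L^{\gamma'}_{T}$ norms I apply H\"older in $t$ to separate a power of $T$ from $\| \,\|u(\cdot,t)\|_{L^{\alpha+2}}^{\alpha}\|v(\cdot,t)\|_{L^{\alpha+2}}\,\|$, which by H\"older in $t$ with exponents summing correctly is controlled by $\|u\|_{Y(T)}^{\alpha}\|v\|_{Y(T)}$ provided $\gamma'$ relates to $\frac{4(\alpha+2)}{n\alpha}$ the right way; the leftover time factor is a power $T^{\kappa}$. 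The contribution from $B$ (where we may take $\rho$ close to $\alpha+2$, even $\rho=\alpha+2$ when admissible, so that no spatial H\"older loss in the $u,v$ factors is needed) produces the exponent $1-\frac{n\alpha}{4}$; the contribution from $B^{c}$, where we must choose $\rho$ strictly larger and hence $\theta_{2}$ constrained by $n/\theta_{2}<b$, produces the more delicate exponent $\frac{4-2b-n\alpha}{4}-\frac{n(4-2b-n\alpha)}{4(\alpha+2)(n+2-b)}$. I would carry out the bookkeeping of exponents by fixing $\rho$ as a free parameter in $[2,\frac{2n}{n-2})$ (or $[2,\infty)$ in low dimensions), writing $\gamma$ via the admissibility relation $\frac{1}{\gamma}=\frac{n}{2}(\frac12-\frac1\rho)$, and solving the H\"older constraints for $\theta$ and the remaining spatial exponent; then optimizing (or simply making an explicit admissible choice of $\rho$) to land on the claimed powers of $T$.

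The main obstacle is ensuring that the chosen pairs $(\gamma,\rho)$ actually lie in $\mathcal{A}$ \emph{and} simultaneously make all the H\"older exponents legitimate (each in $[1,\infty]$, with the spatial exponents positive and the weight exponents $\theta_{1},\theta_{2}$ on the correct side of $n/b$). This is a system of inequalities in $n,\alpha,b,\rho$, and it is exactly this feasibility requirement — needing a single $\rho$ (for $B^{c}$) that is admissible, keeps $|x|^{-b}\chi_{B^{c}}\in L^{\theta_{2}}$ with $\theta_{2}<n/b$, and still leaves room to fit the $u,v$ factors into $L^{\alpha+2}$ — that produces the upper bound $\tilde{b}$ on $b$ in \eqref{bforglobal} and the subcriticality hypothesis $\alpha<\frac{4-2b}{n}$. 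I would therefore, after setting up the estimates, reduce the admissibility-plus-H\"older constraints to an explicit inequality in $b$ (quadratic in $b$, whence the square roots in \eqref{bforglobal}) and verify that $0<b\le\tilde b$ (strict for $n=2$, where $\rho=\infty$ is excluded) is exactly the range where a valid choice exists; the endpoint fuss at $n=2$ is why that case is stated with a strict inequality. The $n\ge 3$ endpoint $\rho=\frac{2n}{n-2}$ must also be checked to be allowable, or else avoided by taking $\rho$ slightly below it, which only affects constants and not the powers of $T$.
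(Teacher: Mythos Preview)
Your overall strategy---splitting $\mathbb{R}^n = B \cup B^c$, choosing a separate admissible pair for each region, and applying H\"older in space and then in time---is exactly the paper's approach. However, you have the roles of $B$ and $B^c$ reversed, and this is not a harmless relabelling.

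On $B^c$ the weight satisfies $|x|^{-b}\le 1$, so one does \emph{not} peel it off via H\"older at all: one simply bounds $\|\,|x|^{-b}|u|^\alpha v\,\|_{L^{\rho'}(B^c)}\le \|\,|u|^\alpha v\,\|_{L^{\rho'}}$, takes $\rho_1=\alpha+2$ (admissible since $0<\alpha<\frac{4-2b}{n}$), and this is the region that yields $T^{1-n\alpha/4}$. On $B$ the weight is singular at the origin, so one \emph{must} place $|x|^{-b}\in L^{\gamma_3}(B)$ with $n/\gamma_3>b$; this forces $\rho_2'$ strictly smaller than $\frac{\alpha+2}{\alpha+1}$ (equivalently $\rho_2>\alpha+2$) and produces the second, more intricate power of $T$. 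Your parenthetical claim that on $B$ one ``may take $\rho=\alpha+2$ \ldots\ so that no spatial H\"older loss in the $u,v$ factors is needed'' is internally inconsistent: with $\rho=\alpha+2$ and $|u|^\alpha v$ placed in $L^{(\alpha+2)/(\alpha+1)}$ there is no room left for a nontrivial $L^{\theta}$ factor to absorb the singular weight. (There is also a slip later where you write $\theta_2<n/b$ for $B^c$, contradicting your own earlier, correct, condition $n/\theta_2<b$.)

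Consequently, the admissibility constraint that generates $\tilde b$ arises from the \emph{inside}-the-ball pair $(\gamma_2,\rho_2)$, not from $B^c$ as you state. In the paper the explicit choice
\[
\rho_2=\frac{2n(n+2-b)}{n^2-2nb-4b+2b^2}
\]
is made, and the requirement that $\rho_2$ lie in the admissible range ($\le \frac{2n}{n-2}$ for $n\ge 3$, $<\infty$ for $n=2$, $\le\infty$ for $n=1$) is precisely what gives the quadratic in $b$ and hence $\tilde b$; the strict inequality at $n=2$ is because $\rho=\infty$ is excluded there. Once you swap $B$ and $B^c$ in your write-up, the argument matches the paper's.
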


\begin{proof} Recall $B=B(0,1).$ Note  that 
\begin{align*}
    \inf_{(\gamma,\rho)\in \mathcal{A}} \|~|x|^{-b}|u|^{\alpha}v\|_{L^{\gamma'}_{T}L^{\rho '}} &\leq \inf_{(\gamma,\rho)\in \mathcal{A}}\|~|x|^{-b}|u|^{\alpha}v\|_{L^{\gamma'}_{T}L^{\rho '}(B^{c})} +\inf_{(\gamma,\rho)\in \mathcal{A}}\|~|x|^{-b}|u|^{\alpha}v\|_{L^{\gamma'}_{T}L^{\rho '}(B)} \\
    & = A_{1}+A_{2}.
    \end{align*}
    We need to find an admissible pair $(\gamma_{1},\rho_{1})$ to estimate $A_{1}$ by $Y(T)$ norm. Using H\"older's inequality twice, we obtain
    \begin{align}
        A_{1} &\lesssim \|~|u|^{\alpha}v\|_{L^{\gamma'_{1}}_{T}L^{\rho '_{1}}(B^{c})} \nonumber\\
        &\label{YTB1}\leq T^{1-\frac{n\alpha}{4}}\|u\|_{Y(T)}^{\alpha}\|v\|_{Y(T)} 
    \end{align}
 with $\rho'_{1}$ satisfying H\"older conditions
  \begin{equation}\label{H1x}
      \frac{1}{\rho'_{1}}=\frac{\alpha}{\alpha+2}+\frac{1}{\alpha+2}.
\end{equation}

  This gives value of $\rho'_{1}=\frac{\alpha+2}{\alpha+1} $  (and $\rho_{1}=\alpha +2)$. Since $(\gamma_{1},\rho_{1})$ is an admissible pair \footnote{It is easy to check that $\rho_{1}=\alpha +2$ satisfies all conditions in Definition \ref{apairs} for the bound on $\alpha.$}, we get $$\gamma'_{1}=\frac{4(\alpha+2)}{4(\alpha+2)-n\alpha}.$$ 
  While $\gamma'_{1}$ satisfies H\"older conditions \footnote{Each H\"older exponent in \eqref{H1x} and \eqref{H1t} lies in the interval $[1,\infty]$ by the hypothesis on $\alpha$.} 
  \begin{equation}\label{H1t}
  \frac{1}{\gamma'_{1}}=\frac{1}{\frac{4(\alpha+2)}{4(\alpha+2)-n\alpha}}=\frac{1}{\nu}+\frac{\alpha}{\frac{4(\alpha+2)}{n\alpha}}+\frac{1}{\frac{4(\alpha+2)}{n\alpha}}.
  \end{equation}
Here $\frac{1}{\nu}$ represents the exponent of $T.$ Solving for $\frac{1}{\nu}$ \footnote{Note that $\frac{1}{\nu}$ is positive by the hypothesis on $\alpha$.} , we get $$\frac{1}{\nu}=1-\frac{n\alpha}{4}.$$

Similarly, we need to find an admissible pair $(\gamma_{2},\rho_{2})$ to estimate $A_{2}$ by $Y(T)$ norm. Applying H\"older's inequality twice, we obtain
   \begin{align}
        \hspace{1.6cm}A_{2}&\leq \|~|x|^{-b}|u|^{\alpha}v \|_{L^{\gamma'_{2}}_{T}L^{\rho '_{2}}(B)} \nonumber\\
        &\label{A2E1}\leq \| ~\| |x|^{-b}\|_{L^{\gamma_3}(B)} \|u\|^{\alpha}_{L^{\alpha+2}}\|v\|_{L^{\alpha+2}}\|_{L^{\gamma'_{2}}_{T}} \\
        &\label{A2E2}\leq  T^{\frac{1}{q_{1}}} \|~|x|^{-b}\|_{L^{\gamma_{3}}(B)} \|u\|^{\alpha}_{Y(T)}\|v\|_{Y(T)}.
    \end{align}
Here $\rho'_{2},\gamma'_{2}, q_{1}$ and $\gamma_{3}$ satisfies following conditions:
    \begin{eqnarray}
       \label{HXV} \frac{1}{\rho'_{2}} &=& \frac{1}{\gamma_{3}}+\frac{\alpha}{\alpha+2}+\frac{1}{\alpha+2}\\
        \label{HTV}\frac{1}{\gamma'_{2}} &=& \frac{1}{q_{1}}+\frac{n \alpha^{2}}{4(\alpha+2)}+\frac{n\alpha}{4(\alpha+2)}\\
        \label{AP}\frac{1}{\gamma'_{2}} &=&1-\frac{1}{2}\left(\frac{n}{\rho'_{2}}-\frac{n}{2}\right) \\
        \label{tpower}\frac{1}{q_{1}}& >&0 \\
       \label{SLB} \frac{n}{\gamma_{3}}&>&b.
         \end{eqnarray}
        Note that \eqref{HXV} and \eqref{HTV} is due to H\"older's inequality for space and time variable applied to get \eqref{A2E1} and \eqref{A2E2} respectively \footnote{The  H\"older exponents in \eqref{HXV} and \eqref{HTV} lies in the interval $[1,\infty]$ due to the bound on $\alpha$ and $b$.}. Since $(\gamma_{2},\rho_{2})$ is an admissible pair,  we have \eqref{AP}. The exponent of $T$ needs to be positive in \eqref{A2E2}, hence we have \eqref{tpower}. Condition \eqref{SLB} is required as $\|~|x|^{-b}\|_{L^{\gamma_{3}}(B)} <\infty$ if and only if $\frac{n}{\gamma_{3}}>b.$
         \\From \eqref{HXV} and \eqref{SLB},
\begin{equation}
    \frac{n}{\rho'_{2}}-\frac{n(\alpha+1)}{\alpha+2}>b.
\end{equation}
Solving for $\alpha$\footnote{Note that $0 <\frac{2n-2b\rho'_{2}-n\rho'_{2}}{n\rho'_{2}-n+b\rho'_{2}}$ for $b<2$ which holds by the bound on $b.$},
\begin{equation}
    \alpha <\frac{2n-2b\rho'_{2}-n\rho'_{2}}{n\rho'_{2}-n+b\rho'_{2}}.
\end{equation}
By our hypothesis, $0<\alpha<\frac{4-2b}{n}.$ Solving for $\rho'_{2}$ 
\begin{equation*}
    \frac{2n-2b\rho'_{2}-n\rho'_{2}}{n\rho'_{2}-n+b\rho'_{2}}=\frac{4-2b}{n}
\end{equation*} yields 
\begin{equation}
    \rho'_{2}=\frac{2n(n+2-b)}{n(n+4)+2b(2-b)}.
\end{equation}
Inserting the value of $\rho'_{2}$ in \eqref{AP}, we get
\begin{equation}
    \gamma'_{2}=\frac{4(n+2-b)}{(2-b)(n+4-2b)}.
\end{equation}
 Taking Remark \ref{rho2} into account, we have   
  $$(\gamma_{2},\rho_{2}) =\left(\frac{4(n+2-b)}{2n+4b+bn-2b^2} , \frac{2n(n+2-b)}{n^2-2nb-4b+2b^2}\right)\in \mathcal{A}.$$
From \eqref{HTV}, we obtain
\begin{align*}
    \frac{4}{q_{1}}-(4-2b)&=\frac{(2-b)(n+4-2b)}{n+2-b}-\frac{n\alpha(\alpha+1)}{\alpha+2}-(4-2b)\\
    &=\frac{-n(4-2b-n\alpha)}{(n+2-b)(\alpha+2)}-n\alpha.
    \end{align*}This gives
\begin{equation}\label{q1}
    \frac{1}{q_{1}}=\frac{4-2b-n\alpha}{4}-\frac{n(4-2b-n\alpha)}{4(n+2-b)(\alpha+2)}.
\end{equation}
Since $0<\alpha< \frac{4-2b}{n}$ and $ 0<b< \min\{2,n\}$,  note that  $ \frac{1}{q_{1}} $ in \eqref{q1} is a positive quantity.
Substituting the value of $\frac{1}{q_{1}}$ in \eqref{A2E2}, we have \begin{align}
    A_{2} &\label{YTB2}\lesssim T^{\frac{4-2b-n\alpha}{4}-\frac{n(4-2b-n\alpha)}{4(\alpha+2)(n+2-b)}} \|u\|^{\alpha}_{Y(T)}\|v\|_{Y(T)}.
    \end{align}
    Combining \eqref{YTB1} and \eqref{YTB2}, we have the claim.    
\end{proof}
%\begin{Remark}\label{diffAP}
%We select $\gamma'_{1} = \frac{4(\alpha + 2)}{4(\alpha + 2) - n \alpha}$ and $\rho'_{1} = \frac{\alpha + 2}{\alpha + 1}$ for estimating the nonlinearity outside the ball. For estimating the nonlinearity inside the ball, we use $\gamma'_{2} = \frac{4(n + 2 -b)}{2n - 8b + 8 - bn + 2b^2}$ and $\rho'_{2}= \frac{2n(n + 2-b)}{n^2 + 4n + 4b - 2b^2}$.
%\end{Remark}

 \begin{Remark}\label{rho2} In  order to estimate $A_{2}$ by $Y(T)$ norm in Lemma \ref{lemlwp},  we used below admissible pair $(\gamma_{2},\rho_{2}),$ which imposes  the  restriction on $b.$
\begin{enumerate}
    \item Under the hypothesis of Lemma \ref{lemlwp}, we have 
    $$(\gamma_{2},\rho_{2}) =\left(\frac{4(n+2-b)}{2n+4b+bn-2b^2} , \frac{2n(n+2-b)}{n^2-2nb-4b+2b^2}\right)\in \mathcal{A}.$$
    \item For $n \geq 3,$ we have  
    $$\left(0<b \leq \frac{n+6-\sqrt{(n+6)^{2}-32}}{4}\right) \implies 2\leq \rho_{2} \leq \frac{2n}{n-2}  \footnote{It is easy to compute that $2\leq \rho_{2}$  for $0 < b <2$.}.$$
    \item  When $n=1,2$, we have $ 2 \leq \rho_{2} \leq \infty$ and $2\leq \rho_{2} < \infty$ respectively for $b$ satisfying
    \begin{equation*}
    \begin{cases}
    0<b\leq \frac{3-\sqrt{7}}{2} \quad &\text{if} \quad n=1 \\
    0<b<2-\sqrt{2} \quad &\text{if} \quad n=2.
    \end{cases}
    \end{equation*}
\end{enumerate}
%As the dimension \textcolor{red}{$n~(\geq 2)$ }increases, the range of $b$ diminishes and $b$ converges to $0$ as $n$ approaches infinity.
 \end{Remark}
\begin{Remark}\label{lemlwp2}
For $T\leq 1$ and $\alpha>0$, by Lemma \ref{srp}\eqref{embedding}, we have $$ X(T)=C_TL^2 \cap Y(T) +C_TM^{(\alpha+2), (\alpha +2)'} \hookrightarrow 
 Y(T)\footnote{Note that $\|\cdot\|_{Y(T)} \lesssim_{n} \|\cdot\|_{X(T)}.$}.$$
  %The $Y(T)$ norm can be further bounded by $X(T)$ norm in Lemma \ref{lemlwp} for $T \leq 1$. 
  Thus, we have
    $$\inf_{(\gamma,\rho)\in \mathcal{A}} \|~|x|^{- b}|u|^{\alpha}v\|_{L_T^{\gamma'}L^{\rho '}} \lesssim_{n} ( T^{1-\frac{n\alpha}{4}}+ T^{\frac{4-2b-n\alpha}{4}-\frac{n(4-2b-n\alpha)}{4(\alpha+2)(n+2-b)}})\|u\|_{X(T)}^{\alpha}\|v\|_{X(T)}$$
    where $b$ and $\alpha$ be as in Lemma \ref{lemlwp}.
\end{Remark}

  \begin{Remark}
     For $b=0,$ we only need to evaluate $A_{1}$ in Lemma \ref{lemlwp} as we do not have any singularity in this case. Cf. \cite{leonidthesis,LeonidIn}.
 \end{Remark}
 
 \begin{Remark}\label{narrowb} 
In contrast to $L^2-$GWP result of Guzm\'an (see \eqref{gwpl2}), we need  to narrow the range of $b$, from $0<b<\min \{2,n\}$\footnote{In \cite[Lemma 3.1]{guzman2020}, $\rho_{2}=\frac{4-2b+2n}{n-b}$ with $\rho_{2} \leq \frac{2n}{n-2}$ for $n \geq 3$ and  $\rho_{2}>\frac{2n}{n-b}$ which would be fulfilled only if $b \leq \min\{2,n\}.$ } to $0<b\leq \tilde{b}$. Revisit Remark \ref{rho2} for details.
 \end{Remark}
 
\begin{proof}[\textbf{Proof of Theorem \ref{lwp}}]By Duhamel's principle, INLS \eqref{INLS} is equivalent to the integral equation 
\begin{equation*}
    u(t)=e^{it\Delta}u_{0}+i \mu \int_{0}^{t}e^{i(t-s)\Delta}|x|^{-b}(|u|^{\alpha}u)(s)ds :=\Lambda(u)(t).
\end{equation*}
Let $a$ and $T$ be a positive real numbers (to be chosen later). 
    Define $$B(a,T):=\{u\in X(T):\|u\|_{X(T)}\leq a\}.$$
We will show that $\Lambda$ 
    is a contraction map on $B(a,T).$ 
    Firstly, we consider the linear evolution of $u_{0}$ where $u_{0}=v_{0}+w_{0}\in L^{2}+ M^{\alpha+2,(\alpha+2)'}, v_{0}\in L^{2}$ and $w_{0}\in M^{\alpha+2,(\alpha+2)'}.$ Assume that $T\leq 1$. Using Theorem \ref{SE1} and Proposition \ref{mpq}, we have
    \begin{align}
        \|e^{it\Delta}u_{0}\|_{X(T)}&\leq      \|e^{it\Delta}v_{0}\|_{X_{1}(T)}  +      \|e^{it\Delta}w_{0}\|_{X_{2}(T)}\nonumber\\
        &\lesssim_{n,\alpha}      \|v_{0}\|_{L^2}  + (1+T^{2})^{\frac{n}{2}\left(\frac{1}{2}-\frac{1}{\alpha+2}\right)}     \|w_{0}\|_{M^{\alpha+2,(\alpha+2)'}}\nonumber\\
         &\label{linearu0}\lesssim_{n}      \|u_{0}\|_{L^2  +  M^{\alpha+2,(\alpha+2)'}}.
    \end{align}
    This suggests the choice of
    $\label{achoice}
    a=C(n,\alpha)\|u_{0}\|_{L^2  +  M^{\alpha+2,(\alpha+2)'}} .   $
    Using $X_{1}(T)\hookrightarrow X(T),$ Theorem \ref{SE1} (with $r\in \{\alpha+2,2\}$) and Remark \ref{lemlwp2}, we have
    \begin{align}
    \left|\left| \int_{0}^{t} e^{i(t-\tau)\Delta}|x|^{-b}(|u|^{\alpha}u)(\tau)d\tau \right|\right|_{X(T)}
    &\lesssim \left|\left| \int_{0}^{t} e^{i(t-\tau)\Delta}|x|^{-b}(|u|^{\alpha}u)(\tau)d\tau \right|\right|_{X_{1}(T)} \nonumber\\
   &\lesssim_{n,\alpha,b} \||u|^{\alpha}u\|_{L^{\gamma'_{1}}_{T}L^{\rho '_{1}}(B^{c})}+\|~|x|^{-b} |u|^{\alpha}u\|_{L^{\gamma'_{2}}_{T}L^{\rho '_{2}}(B)}\nonumber\\
    &\label{ipGamma}\lesssim_{n} ( T^{\frac{4-2b-n\alpha}{4}-\frac{n(4-2b-n\alpha)}{4(\alpha+2)(n+2-b)}})\|u\|_{X(T)}^{\alpha+1}.
    \end{align}
    Taking
    \begin{equation}\label{Tchoice}
        T :=\min \left\{ 1,~C(n,\alpha,b) \|u_{0}\|^{-\frac{\alpha}{\frac{4-2b-n\alpha}{4}-\frac{n(4-2b-n\alpha)}{4(\alpha+2)(n+2-b)}}}_{L^2  +  M^{\alpha+2,(\alpha+2)'}} \right\}~,
    \end{equation}
 we combine \eqref{linearu0} and \eqref{ipGamma} to conclude $\Lambda(u)\in B(a, T).$ Similarly,  one can show that $\Lambda(u)$ is a contraction mapping. In fact, 
 using Lemma \ref{eg} for $G(0,u,v)$ and the previous argument employed to bound the integral part, for $u,v \in B(a,T)$, we have
\begin{align}
\hspace{-4cm}\|\Lambda(u)-\Lambda(v)\|_{X(T)} & \lesssim \left|\left| \int_{0}^{t} e^{i(t-\tau)\Delta}|x|^{-b}G(0,u,v)(\tau)d\tau \right|\right|_{X_{1}(T)} \nonumber
\end{align}
\begin{align}
&\lesssim_{n,\alpha,b} \|(|u|^{\alpha}+|v|^{\alpha})|u-v|\|_{L^{\gamma'_{1}}_{T}L^{\rho '_{1}}(B^{c})}+\|~|x|^{-b}(|u|^{\alpha}+|v|^{\alpha})|u-v|\|_{L^{\gamma'_{2}}_{T}L^{\rho '_{2}}(B)} \nonumber \\
    &\lesssim_{n} ( T^{\frac{4-2b-n\alpha}{4}-\frac{n(4-2b-n\alpha)}{4(\alpha+2)(n+2-b)}})(\|u\|_{X(T)}^{\alpha}+\|v\|_{X(T)}^{\alpha})\|u-v\|_{X(T)}.
\end{align}
By the choice of $a$ and $T$ \footnote{In the expression of $a$ and $T, C(n,\alpha,b)$ is choosen sufficiently small to ensure that $\Lambda(u)$ is a contraction map on $B(a,T).$}, we have $$\|\Lambda(u)-\Lambda(v)\|_{X(T)}\leq \frac{1}{2}\|u-v\|_{X(T)}.$$
Thus, by  the contraction mapping theorem,  we  obtain a unique fixed point  for $\Lambda,$ which is a solution to INLS \eqref{INLS}.
 By the standard argument, one can establish the blow-up alternative.\\
 For Lipschitz continuity, let $ v_{0}, w_{0} \in V $, where $ V $ is a neighborhood of $ u_{0}$. Denote by $v$ and $w$ the unique maximal solutions of INLS \eqref{INLS} over the interval $[0,T^*)$ with initial values $ v_{0} $ and $ w_{0} $, respectively. Thus, \( v \) and \( w \in X(T')\) for \( T' < T^{*}.  \)
Hence
\begin{eqnarray*}
    \|v-w\|_{X(T')} &=& \|e^{it\Delta}v_{0}-e^{it\Delta}w_{0} + \Lambda(v) - \Lambda(w) \|_{X(T')}\\
    & \lesssim_{n,\alpha} & \|v_{0}-w_{0}\|_{L^2+M^{\alpha+2,(\alpha+2)'}}+\frac{1}{2}\|v-w\|_{X(T')}.\\
\text{Thus,}\;
\|v-w\|_{X(T')} &\lesssim_{n,\alpha}& \|v_{0}-w_{0}\|_{L^2+M^{\alpha+2,(\alpha+2)'}}
\end{eqnarray*}
for any $v_{0}, w_{0} \in V \subset L^2 + M^{\alpha+2,(\alpha+2)'}.$ This concludes local Lipschitz continuity.
\end{proof}
\section{Global well-posedness in $M^{p, p'}$}\label{secgwp}
In this section, we shall prove Theorem \ref{gwp}.  We start by decomposing (using Lemma \ref{ipt}) initial data $u_0 \in M^{p, p'} \subset L^2 + M^{(\alpha +2), (\alpha +2)'} $ into two parts such that the size of $M^{(\alpha +2), (\alpha +2)'}-$data can be controlled by  arbitrary small quantity. Specifically, for any $N>1$  and given $u_0 \in M^{p, p'},$ there exists  $\phi_0 \in L^2, \psi_0 \in M^{(\alpha +2), (\alpha +2)'}$ (depending on $N$)   such that 
\begin{equation}\label{dp}
    u_0= \phi_0 + \psi_0
\end{equation}
with 
\begin{eqnarray}\label{asi}
\|\phi_0\|_{L^2} \lesssim N^{\beta},  \quad \|\psi_0\|_{M^{(\alpha +2), (\alpha +2)'}} \lesssim  \frac{1}{N}
\end{eqnarray}
where\begin{equation}\label{betap}
    \beta = \frac{\frac{1}{2} - \frac{1}{p}}{\frac{1}{p} - \frac{1}{\alpha+2}}.
\end{equation}
Firstly, we consider  INLS \eqref{INLS} with initial data $\phi_0:$
\begin{eqnarray}\label{ivpL2}
\begin{cases}
 i \partial_t v_{0} + \Delta v_{0}+\mu |x|^{-b}|v_{0}|^{\alpha}v_{0}=0 \\
v_{0}(\cdot,0)=\phi_{0}\in L^2.   
\end{cases}        
    \end{eqnarray}
We recall that, in  \cite[Theorem 1.8]{guzman2020},  Guzm\'an proved that \eqref{ivpL2} has a unique global solution 
\begin{eqnarray}\label{gwpl2}
  v_{0}\in C(\R, L^2) \cap L_{loc}^{q}(\R,L^{r} )
\end{eqnarray}
\begin{equation}\label{v0qr2}
\sup_{(q,r)\in \mathcal{A}}\|v_{0}\|_{L_{loc}^{q} L^{r}} \lesssim_{n,r} \|\phi_{0}\|_{L^{2}}.
\end{equation}
Now, consider the modified INLS \eqref{INLS} associated with the evolution of $\psi_{0}$: 
\begin{eqnarray}\label{ivpMod}
\begin{cases}
 i \partial_t w + \Delta w +\mu |x|^{-b}(|w+v_{0}|^{\alpha}(w+v_{0})-|v_{0}|^{\alpha}v_{0})=0 \\
w(\cdot,0)=\psi_{0}\in M^{\alpha+2, (\alpha+2)'} .
\end{cases}
\end{eqnarray}
The solution of above I.V.P \eqref{ivpMod} is given as
\begin{equation}\label{modsoln1}
w =e^{it\Delta}\psi_{0}+w_{0}.
\end{equation}
The  nonlinear interaction $w_0$ corresponding to $\psi_0$ can be expressed as \begin{align}\label{modsoln2}
    w_{0} = i \mu \int_{0}^{t}  e^{i(t-\tau)\Delta}|x|^{-b} \left( \big| w+v_{0}\big|^{\alpha} (w+v_{0}) - |v_{0}|^{\alpha} v_{0}\right)(\tau)  \, d\tau.
\end{align}
Formally,  we may rewrite  the  solution to  INLS \eqref{INLS} corresponding to data $u_{0}$ in \eqref{dp} as follows \begin{eqnarray}
u=v_{0}+w = v_{0}+e^{it\Delta}\psi_{0}+w_{0}.\label{solnlocal}
\end{eqnarray}
 In view of \eqref{gwpl2} and Proposition \ref{mpq},   we notice that  $v_0$ and $e^{it\Delta} \psi_0$ are globally defined in appropriate spaces.  In order to establish desire  global existence,  we  first understand  the time interval of existence for $w_0$.   To this end, we may rewrite
\begin{align}\label{w01}
    w_{0} 
    =i \mu \int_{0}^{t} e^{i(t-\tau)\Delta}|x|^{-b} G(v_{0} + e^{i\tau\Delta}\psi_{0}, w_{0})(\tau) \, d\tau +  i \mu \int_{0}^{t} e^{i(t-\tau)\Delta}|x|^{-b} G(v_{0}, e^{i\tau\Delta}\psi_{0})(\tau) \, d\tau,
\end{align}
 with $G$ as defined in Lemma \ref{eg}. And more generally, we have the following local well-posedness result for   perturb integral equation \eqref{w01}.

\begin{prop}\label{w0exist} Let $\phi \in L^2, \psi\in M^{(\alpha +2), (\alpha +2)'}$ and $Y(T)$ be as in  \eqref{YT}.  Assume that  $b$ and $\alpha$  be  as in Theorem \ref{gwp}. Denote by  $v$  the $L^2-$global solution  (as in \eqref{gwpl2}) for initial value $\phi$.
Then  there exists  a constant $C=C(n, \alpha,b)>0$ such that  integral equation $$w= i \mu \int_{0}^{t} e^{i(t-\tau)\Delta}|x|^{-b} G(v + e^{i\tau\Delta}\psi, w)(\tau) \, d\tau +  i \mu \int_{0}^{t} e^{i(t-\tau)\Delta}|x|^{-b} G(v, e^{i\tau\Delta}\psi)(\tau) \, d\tau$$ has a unique solution  $w \in Y(T)$   provided $T$ satisfying
\begin{align}
    \label{c1} T &\leq 1 \\
    \label{c2} T &\leq C \left( \|\phi\|_{L^{2}} + \|\psi\|_{M^{\alpha+2,(\alpha+2)'}} \right)^{-\frac{\alpha}{\frac{4-2b-n\alpha}{4} - \frac{n(4-2b-n\alpha)}{4(\alpha+2)(n+2-b)}}} \\
    \label{c3} T &\leq C \left( \|\psi\|_{M^{\alpha+2,(\alpha+2)'}} \right)^{-\frac{\alpha}{\frac{4-2b-n\alpha}{4} - \frac{n(4-2b-n\alpha)}{4(\alpha+2)(n+2-b)} + \frac{n\alpha^{2}}{4(\alpha+2)}}}.
\end{align}
\end{prop}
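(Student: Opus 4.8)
The plan is to realize $w$ as the unique fixed point, in a closed ball of $Y(T)$, of the map $\Phi(w)=\Phi_1(w)+\Phi_2$ obtained by reading off the right-hand side of the integral equation: $\Phi_1(w)$ is the Duhamel term containing $G(v+e^{i\tau\Delta}\psi,w)$ and $\Phi_2$ is the source term containing $G(v,e^{i\tau\Delta}\psi)$ (independent of $w$). I will take the radius of the ball to be a constant depending only on $n,\alpha,b$ times $\|\psi\|_{M^{\alpha+2,(\alpha+2)'}}$, so that the resulting $w$ is automatically small in $Y(T)$ — the smallness that the proof of Theorem \ref{gwp} needs. The two structural inputs are: $\|v\|_{Y(T)}\lesssim_n\|\phi\|_{L^2}$ for $T\le1$, by \eqref{v0qr2} with the admissible pair $\big(\tfrac{4(\alpha+2)}{n\alpha},\alpha+2\big)$; and $\|e^{i\tau\Delta}\psi\|_{L^{\alpha+2}_x}\lesssim_n\|\psi\|_{M^{\alpha+2,(\alpha+2)'}}$ uniformly for $\tau\in[0,1]$, from Proposition \ref{mpq}(2) and the embedding $M^{\alpha+2,(\alpha+2)'}\hookrightarrow L^{\alpha+2}$ (Lemma \ref{srp}\eqref{embedding}, applicable since $(\alpha+2)'\le\alpha+2$), whence $\|e^{i\tau\Delta}\psi\|_{Y(T)}\lesssim_n T^{\frac{n\alpha}{4(\alpha+2)}}\|\psi\|_{M^{\alpha+2,(\alpha+2)'}}$ once \eqref{c1} holds; in particular $e^{i\tau\Delta}\psi\in Y(T)$, so $\Phi$ is well defined on $Y(T)$.

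Write $K:=\|\phi\|_{L^2}+\|\psi\|_{M^{\alpha+2,(\alpha+2)'}}$, $m:=\|\psi\|_{M^{\alpha+2,(\alpha+2)'}}$ and $\theta:=\tfrac{4-2b-n\alpha}{4}-\tfrac{n(4-2b-n\alpha)}{4(\alpha+2)(n+2-b)}$ (this is $\tfrac1{q_1}$ from \eqref{q1}, so $\theta>0$; moreover $\theta<1-\tfrac{n\alpha}{4}$). I would estimate the two pieces of $\Phi$ by combining the inhomogeneous Strichartz estimate (Theorem \ref{SE1}) with output pair $\big(\tfrac{4(\alpha+2)}{n\alpha},\alpha+2\big)$, the pointwise bound of Lemma \ref{eg}, and Lemma \ref{lemlwp} (applied separately on $B$ and on $B^c$ with two different admissible pairs, exactly as in its proof — this is where $0<b\le\tilde{b}$ is used). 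Since $\big|G(v+e^{i\tau\Delta}\psi,w)\big|\lesssim_\alpha\big(|v|^\alpha+|e^{i\tau\Delta}\psi|^\alpha+|w|^\alpha\big)|w|$, this should give, for $T\le1$,
\[
\|\Phi_1(w)\|_{Y(T)}\lesssim_{n,\alpha,b} T^{\theta}\big(K^\alpha+\|w\|_{Y(T)}^\alpha\big)\|w\|_{Y(T)}
\]
(keeping only the dominant power $T^\theta$ since $T^{1-n\alpha/4}\le T^\theta$, just as in \eqref{ipGamma}), together with the matching Lipschitz bound, obtained by the same computation on $G(v+e^{i\tau\Delta}\psi,w_1)-G(v+e^{i\tau\Delta}\psi,w_2)$ via Lemma \ref{eg}. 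Since $\big|G(v,e^{i\tau\Delta}\psi)\big|\lesssim_\alpha\big(|v|^\alpha+|e^{i\tau\Delta}\psi|^\alpha\big)|e^{i\tau\Delta}\psi|$, the same machinery gives $\|\Phi_2\|_{Y(T)}\lesssim_{n,\alpha,b} T^\theta\|\phi\|_{L^2}^\alpha\|e^{i\tau\Delta}\psi\|_{Y(T)}+T^\theta\|e^{i\tau\Delta}\psi\|_{Y(T)}^{\alpha+1}$, and then inserting $\|e^{i\tau\Delta}\psi\|_{Y(T)}\lesssim T^{\frac{n\alpha}{4(\alpha+2)}}m$ and using $T\le1$ to lower the exponent $(\alpha+1)\tfrac{n\alpha}{4(\alpha+2)}$ to $\tfrac{n\alpha^2}{4(\alpha+2)}$,
\[
\|\Phi_2\|_{Y(T)}\lesssim_{n,\alpha,b} T^{\theta}\|\phi\|_{L^2}^\alpha\,m+T^{\theta+\frac{n\alpha^2}{4(\alpha+2)}}m^{\alpha+1}.
\]

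To conclude, I fix the radius $R$ as a suitable $(n,\alpha,b)$-multiple of $m$. Condition \eqref{c2} (with its constant chosen small) gives $T^{\theta}K^\alpha\le c$, which since $R\lesssim m\le K$ (so $R^\alpha\lesssim K^\alpha$) makes $\|\Phi_1(w)\|_{Y(T)}\le\tfrac12 R$ on the ball and supplies the $\tfrac12$-Lipschitz bound; it also controls the first term of the $\Phi_2$ estimate by $\le cm$, while condition \eqref{c3} (likewise with small constant), i.e. $T^{\theta+\frac{n\alpha^2}{4(\alpha+2)}}m^\alpha\le c$, controls the second term by $\le cm$, so $\|\Phi_2\|_{Y(T)}\le\tfrac12 R$. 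Hence $\Phi$ is a self-map and a $\tfrac12$-contraction of the ball, and the Banach fixed point theorem yields a unique $w\in Y(T)$; uniqueness in all of $Y(T)$ follows because the same contraction estimate applies to any ball containing two solutions. I expect the $\Phi_2$ estimate to be the delicate point: one has to perform the $B/B^c$ split for each trilinear term, make the same admissible-pair choices as in Lemma \ref{lemlwp}, and crucially put the excess copies of $e^{i\tau\Delta}\psi$ into $L^\infty_T L^{\alpha+2}$ (bounded by $m$ via Proposition \ref{mpq}, at no cost in $T$) rather than into $Y(T)$, then verify that the resulting powers of $T$ are dominated by those in \eqref{c1}--\eqref{c3}; the remaining arithmetic is the routine contraction scheme already executed in the proof of Theorem \ref{lwp}.
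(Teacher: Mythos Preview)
Your proposal is correct and follows the same contraction-mapping scheme as the paper: both run a fixed-point argument in a ball of $Y(T)$, estimate $\Phi_1$ and $\Phi_2$ via Theorem~\ref{SE1}, Lemma~\ref{eg}, and Lemma~\ref{lemlwp}, and close using \eqref{c1}--\eqref{c3}. The only substantive difference is the ball radius: you take $R$ to be a multiple of $m=\|\psi\|_{M^{\alpha+2,(\alpha+2)'}}$, whereas the paper takes $A=\tfrac{3}{C}\,T^{\frac{n\alpha}{4(\alpha+2)}}m$, retaining the extra power of $T$ that comes from placing copies of $e^{i\tau\Delta}\psi$ in $L^\infty_TL^{\alpha+2}$ (the trick you anticipate in your last paragraph but then do not use in your stated $\Phi_2$ bound). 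Either choice proves Proposition~\ref{w0exist} as stated, but the paper's sharper radius directly yields $\|w\|_{Y(T)}\lesssim T^{\frac{n\alpha}{4(\alpha+2)}}m$, which is what makes Corollary~\ref{winfty2} a one-line consequence; with your $R\sim m$ you would need to revisit the $\Phi_2$ estimate without discarding that $T$-factor to recover the Corollary.
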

\begin{Remark} \label{nexp}
   Since $0<\alpha<\frac{4-2b}{n}$ and $0<b<\tilde{b}$ \footnote{Note that $\tilde{b} < \min\{2,n\}$ for all $n$.}, it follows that $\frac{n}{(\alpha+2)(n+2-b)}<1$. Thus, the exponents on right hand side of conditions \eqref{c2} and \eqref{c3} involving $L^{2}$ and  $M^{\alpha+2,(\alpha+2)'}$- norms are both negative.
\end{Remark}
 \begin{cor}[growth of perturb solution]\label{winfty2}
 Under the hypothesis of Proposition \ref{w0exist}, there exists a constant $C(n,\alpha,b)$ satisfying
    \begin{equation*}
\|w\|_{L^{\infty}_{T}L^{2}}\lesssim_{n,\alpha,b} T^{\frac{n\alpha}{4(\alpha+2)}}\|\psi\|_{M^{\alpha+2,(\alpha+2)'}}.
    \end{equation*}

 \end{cor}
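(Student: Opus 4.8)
The plan is to obtain the $L^\infty_T L^2$ bound on $w$ directly from the integral equation it solves, using the Strichartz estimates together with the nonlinearity estimate of Lemma \ref{lemlwp}, and then to exploit the smallness of $T$ (encoded in conditions \eqref{c1}--\eqref{c3}) to absorb one of the two resulting terms. First I would apply the $L^\infty_T L^2 \subset L^\infty_T L^2$ piece of the dual Strichartz estimate (Theorem \ref{SE1} with $(q,r)=(\infty,2)$) to both Duhamel terms in the integral equation for $w$ from Proposition \ref{w0exist}, getting
\[
\|w\|_{L^\infty_T L^2} \lesssim \inf_{(\gamma,\rho)\in\mathcal A}\||x|^{-b} G(v+e^{i\tau\Delta}\psi, w)\|_{L^{\gamma'}_T L^{\rho'}} + \inf_{(\gamma,\rho)\in\mathcal A}\||x|^{-b} G(v, e^{i\tau\Delta}\psi)\|_{L^{\gamma'}_T L^{\rho'}}.
\]
Then, using Lemma \ref{eg} to bound $|G(v+e^{i\tau\Delta}\psi, w)| \lesssim (|v|^\alpha + |e^{i\tau\Delta}\psi|^\alpha + |w|^\alpha)|w|$ and $|G(v, e^{i\tau\Delta}\psi)| \lesssim (|v|^\alpha + |e^{i\tau\Delta}\psi|^\alpha)|e^{i\tau\Delta}\psi|$, I would feed these into the estimate of Lemma \ref{lemlwp} (in the form used in \eqref{ipGamma}, i.e. after noting $T\le 1$ collapses the two $T$-powers to the smaller exponent), to obtain, schematically,
\[
\|w\|_{L^\infty_T L^2} \lesssim T^{\theta}\bigl(\|v\|_{Y(T)}+\|e^{i\tau\Delta}\psi\|_{Y(T)}+\|w\|_{Y(T)}\bigr)^{\alpha}\|w\|_{Y(T)} + T^{\theta}\bigl(\|v\|_{Y(T)}+\|e^{i\tau\Delta}\psi\|_{Y(T)}\bigr)^{\alpha}\|e^{i\tau\Delta}\psi\|_{Y(T)},
\]
where $\theta = \frac{4-2b-n\alpha}{4}-\frac{n(4-2b-n\alpha)}{4(\alpha+2)(n+2-b)}$ is the relevant $T$-exponent.

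Next I would control the three norms appearing on the right. For $\|v\|_{Y(T)}$ I would use \eqref{v0qr2}, which gives $\|v\|_{Y(T)}\lesssim \|\phi\|_{L^2}$; for $\|e^{i\tau\Delta}\psi\|_{Y(T)}$ I would use Strichartz (the homogeneous estimate of Theorem \ref{SE1}) combined with the embedding $M^{\alpha+2,(\alpha+2)'}\subset L^{\alpha+2}$ from Lemma \ref{srp}\eqref{embedding}, together with a factor $T^{\frac{n\alpha}{4(\alpha+2)}}$ coming from Hölder in time (since $\frac{4(\alpha+2)}{n\alpha}<\infty$), yielding $\|e^{i\tau\Delta}\psi\|_{Y(T)}\lesssim T^{\frac{n\alpha}{4(\alpha+2)}}\|\psi\|_{M^{\alpha+2,(\alpha+2)'}}$; and for $\|w\|_{Y(T)}$ I would use the a priori bound from the contraction argument in Proposition \ref{w0exist}, which gives $\|w\|_{Y(T)}$ controlled by the same quantities (roughly $\|w\|_{Y(T)}\lesssim \|\phi\|_{L^2}+\|\psi\|_{M^{\alpha+2,(\alpha+2)'}}$, or more precisely by the radius of the ball on which $\Lambda$ contracts). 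Substituting all of this and using conditions \eqref{c1}--\eqref{c3} to make $T^{\theta}\bigl(\|\phi\|_{L^2}+\|\psi\|\bigr)^{\alpha}$ and $T^{\theta+\frac{n\alpha^2}{4(\alpha+2)}}\|\psi\|^{\alpha}$ both $\lesssim 1$, the first (cubic-in-$w$, or rather $(\alpha+1)$-power) term is dominated and I can close the estimate, leaving
\[
\|w\|_{L^\infty_T L^2} \lesssim \bigl(\|\phi\|_{L^2}+\|\psi\|_{M^{\alpha+2,(\alpha+2)'}}\bigr)^{\alpha}\cdot T^{\frac{n\alpha}{4(\alpha+2)}}\|\psi\|_{M^{\alpha+2,(\alpha+2)'}}\cdot T^{\theta},
\]
and conditions \eqref{c2}, \eqref{c3} again absorb the leftover powers of $T$ and the norm factors to leave precisely $T^{\frac{n\alpha}{4(\alpha+2)}}\|\psi\|_{M^{\alpha+2,(\alpha+2)'}}$, as claimed. (Strictly, one must be a little careful that the exponent of $T$ in the final line is $\geq \frac{n\alpha}{4(\alpha+2)}$ after absorption, which is exactly what \eqref{c1} guarantees since $\theta>0$.)

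The main obstacle I anticipate is bookkeeping rather than conceptual: one must track exactly which pair $(\gamma,\rho)$ is used inside and outside the unit ball $B$ (the decomposition $\mathbb R^n = B\cup B^c$ from Lemma \ref{lemlwp}), and verify that after applying the dual Strichartz with target $(\infty,2)$ the admissible exponents $(\gamma_1,\rho_1)$ and $(\gamma_2,\rho_2)$ from that lemma are still usable and produce the claimed $T$-powers. The other delicate point is making sure the $Y(T)$-bound on $w$ that is borrowed from the contraction argument of Proposition \ref{w0exist} is of the right size so that the $(\alpha+1)$-linear term in $w$ genuinely gets absorbed into the left-hand side under \eqref{c2}--\eqref{c3}, rather than merely being bounded; this is the step where the specific form of the exponents in \eqref{c2} and \eqref{c3} is used, and it should be spelled out that $C=C(n,\alpha,b)$ is taken small enough for the absorption. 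Everything else reduces to Hölder in time, the embedding $M^{\alpha+2,(\alpha+2)'}\hookrightarrow L^{\alpha+2}$, and the estimates \eqref{v0qr2} and Lemma \ref{lemlwp} already established.
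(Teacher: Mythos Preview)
Your approach is correct and is exactly the paper's: apply the dual Strichartz estimate with the admissible pair $(\infty,2)$ on the left to the integral equation for $w$, reuse the right-hand side estimates \eqref{before4.10}--\eqref{4.10} from the proof of Proposition~\ref{w0exist} verbatim, and invoke the already-established bound $\|w\|_{Y(T)}\le A\lesssim T^{\frac{n\alpha}{4(\alpha+2)}}\|\psi\|_{M^{\alpha+2,(\alpha+2)'}}$ together with \eqref{c2}--\eqref{c3}. One small correction: to bound $\|e^{i\tau\Delta}\psi\|_{Y(T)}$ you cannot use the homogeneous Strichartz estimate of Theorem~\ref{SE1}, since $\psi\notin L^2$; instead use Proposition~\ref{mpq} (boundedness of $e^{it\Delta}$ on $M^{\alpha+2,(\alpha+2)'}$) followed by the embedding $M^{\alpha+2,(\alpha+2)'}\hookrightarrow L^{\alpha+2}$ and H\"older in time, exactly as in the paper's treatment of \eqref{4.10}.
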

We postpone the proof of Proposition \ref{w0exist} and Corollary \ref{winfty2} for the moment.  And once we have these tools, we are ready to prove Theorem \ref{gwp}.  We start with the following remark. 
\begin{Remark}[proof strategy]
     The following key points are in order to establish a global existence.
     \begin{itemize}\label{comments}
          \item[-] Revisit Remark \ref{bsgp}. In view of the discussion in the para below \eqref{solnlocal}, we are left to handle nonlinear interaction $w_0$. This we shall do using Proposition \ref{w0exist} and Corollary \ref{winfty2}.
          \item[-]   To this end, taking $\phi=\phi_{0}$ and $\psi=\psi_{0}$, note that the solution of $ w_{0}$ exists on  time-interval  $[0,T(N)]$ with $T=T(N)$ 
          satisfying conditions \eqref{c1} to \eqref{c3}.
          \item[-] We extend this time-interval by the iterative  scheme, which involve the following initial value decomposition:
          \begin{itemize}
        \item At each iteration $k\geq 1$, we update the initial value $\phi_{k}\in L^{2}$ using the new smoother term $w_{k-1}(kT)$ and $v_{k-1}(kT)$ obtained from the previous iteration at the initial time $t=kT$ and merge $\phi_{k}$ with $\psi_{k}=e^{ikT\Delta}\psi_{0}$ to form the new approximation. See \eqref{newidk}.
        \item Solving INLS \eqref{INLS} with  $\phi_{k}\in L^{2}$ yields a solution $v_{k}$ as described in \eqref{gwpl2}-\eqref{v0qr2}. Further, solving modified  INLS \eqref{ivpMod}  with initial value $\psi_{k}$ and merging $v_{k},$ we get solution to INLS \eqref{INLS} in the interval $[kT,(k+1)T].$ See \eqref{ss}.
        \item We repeat the iteration by redefining $\phi_{k+1}$ and $\psi_{k+1},$ and maintaining control of the nonlinear interaction term $w_{k}((k+1)T)$ using Corollary \ref{winfty2} so that it can be absorbed into $v_{k}((k+1)T)$ without loosing  mass conservation of $\phi_{k+1}.$
        \end{itemize} 
      \item[-] Finally, we shall observe that $kT(N)$ can be made large enough under certain restriction on $p$. See Remark \ref{Whypso}.  This together with the blow-up alternative (Theorem \ref{lwp}), yields a global solution. 
     \end{itemize}
\end{Remark}

\begin{proof}[\textbf{Proof of Theorem \ref{gwp}}] If possible, we assume that,  for $u_0 \in M^{p, p'}$,  the solution established in 
Theorem \ref{lwp}  is not global in time. So we have the maximal time $T^*< \infty.$  In this case,  we shall  produce a solution $u$  of INLS \eqref{INLS} (to be defined in \eqref{ss} below), which will exist on a larger interval $[0, T_1]$ for $T_1> T^*.$ This will lead to a contradiction to the maximal time interval $[0, T^*).$\\

Denote the constant from Proposition \ref{w0exist} by $C=C(n,\alpha,b)$ and put
\begin{equation}\label{TN}
        T=T(N)=(3CN^{\beta})^{-\frac{\alpha}{\frac{4-2b-n\alpha}{4}-\frac{n(4-2b-n\alpha)}{4(\alpha+2)(n+2-b)}} }.
    \end{equation}
By splitting the initial data (see \eqref{dp}) and using  Proposition \ref{w0exist} for $\phi=\phi_{0}$ and $\psi=\psi_{0}$,  the solution of INLS \eqref{INLS} exists in the interval $[0, T(N)]$ and it is of the form
\[u=v_0 + e^{it\Delta} \psi_0 + w_0,\]
see \eqref{solnlocal}. We wish to extend our solution to the interval $[T(N), 2T(N)]$ by similar procedure but with the new initial data as the sum of the following two functions:    
\[ \phi_{1}=v_{0}(T)+w_{0}(T) \quad \text{and} \quad \psi_{1} =e^{iT\Delta}\psi_{0}.\]
More generally, we 
 wish to  extend our solution further by considering the following iterative procedure :
\begin{itemize}
    \item[--]We define $\phi_{k}$ and $\psi_k$ for $k \geq 1$ (for $k=0,\; \phi_{0}$ and $\psi_0$ are defined in \eqref{dp}) as follows:
    \begin{equation}\label{newidk}
 \phi_{k}= v_{k-1}(kT) + w_{k-1}(kT) \quad \text{and} \quad \psi_{k}= e^{ikT\Delta}\psi_{0}, 
\end{equation}
where
\begin{eqnarray}
    \begin{aligned}
     w_{k-1}(kT)=i \mu \displaystyle \int_{0}^{kT} e^{i(kT-\tau)\Delta} |x|^{-b}G(v_{k-1}+e^{ik\tau \Delta}\psi_{0},w_{k-1})(\tau) d\tau \nonumber \\
   + i \mu \displaystyle \int_{0}^{kT} e^{i(kT-\tau)\Delta} |x|^{-b}G(v_{k-1},e^{ik\tau \Delta}\psi_{0})(\tau) d\tau.
  \label{wKT}
    \end{aligned}
       \end{eqnarray}
       \item[--] Assume for $kT\leq T^*\footnote{Note that $(K-1)T\leq T^*.$ Since $T\leq 1,$ we will have $KT\leq T^{*}+1.$}, \phi=\phi_{k}$ and $\psi=e^{ikT\Delta}\psi_{0}$, $T$ satisfy all three conditions \eqref{c1}, \eqref{c2} and \eqref{c3} of Proposition \ref{w0exist}, where $k \in \{0, 1,\cdots, K-1\}.$
       \item[--]Let $v_{k}$ be INLS evolution of  $\phi_k$, and by construction 
\begin{equation}\label{ss}
   u(\cdot, t)= v_{k} (\cdot, t-kT ) + w_{k}(\cdot, t-kT) + e^{it \Delta} \psi_0,  \quad \text{if} \ t\in [kT, (k+1)T] 
\end{equation}
 defines a solution of INLS \eqref{INLS} for $k \in \{0, 1,\cdots, K-1\}$.  
 \end{itemize}
 We shall show that  the iterative process ends with  $KT >T^*.$ Since  $v_{K}$ and $e^{it\Delta} \psi_0$ are globally defined in appropriate spaces, we are left to handle nonlinear interaction term $w_{K}$ to extend the solution at $K$th iteration. To do this, we shall use Proposition \ref{w0exist} with $\phi=\phi_{K}$ and $\psi=e^{iKT\Delta}\psi_{0}.$  %satisfying all three conditions (\eqref{c1} to \eqref{c3}). %Firstly, we will prove that the conditions \eqref{c1} and \eqref{c3} (being independent of the iteration $k$ in general), holds even for $K$th iteration. 

In view of Remark \ref{nexp}, $T=T(N)\to 0$ as $N\to \infty$ (see \eqref{TN}), and so 
the  smallness condition \eqref{c1} is satisfied independently of $k$ for large $N.$\\
 Using Proposition \ref{mpq} and  \eqref{asi}, we have
\begin{align}\label{ref1} 
\|e^{it\Delta}\psi_{0}\|_{L^{\infty}([0,T^{*}+1],M^{\alpha+2,(\alpha+2)'}) } &\lesssim_{n,T^*}\|\psi_{0}\|_{M^{\alpha+2,(\alpha+2)'}} \lesssim_{n} \frac{1}{N} \xrightarrow{N \to \infty} 0.
    \end{align}
Inserting $e^{ikT\Delta}\psi_{0}$ in the right hand side of \eqref{c3}, we have
\begin{eqnarray*}
\begin{aligned}
    \left( \|e^{ikT\Delta}\psi_{0}\|_{M^{\alpha+2,(\alpha+2)'}} \right)^{-\frac{\alpha}{\frac{4-2b-n\alpha}{4} - \frac{n(4-2b-n\alpha)}{4(\alpha+2)(n+2-b)} + \frac{n\alpha^{2}}{4(\alpha+2)}}} \gtrsim_{n} N^{{\frac{\alpha}{\frac{4-2b-n\alpha}{4} - \frac{n(4-2b-n\alpha)}{4(\alpha+2)(n+2-b)} + \frac{n\alpha^{2}}{4(\alpha+2)}}}} \xrightarrow{N \to \infty} \infty.
   \end{aligned}
\end{eqnarray*}  
Since the lower bound is independent of $k$ and $T \xrightarrow{N \to \infty} 0,$  condition \eqref{c3} holds for sufficiently large $N.$ \\

\noindent 
Thus, we either have $KT> T^*$ or condition \eqref{c2} fails in the last iterative step $k=K,$ i.e.
\begin{equation}\label{aim11}
    3CN^{\beta}< \|\phi_{K}\|_{L^2}+ \|e^{iKT\Delta}\psi_{0}\|_{M^{\alpha+2,(\alpha+2)'}}.
\end{equation} 
Considering \eqref{ref1}, \eqref{aim11} can be written as
\begin{equation}\label{aim}
    3CN^{\beta}< \|\phi_{K}\|_{L^2}+CN^{\beta}.
\end{equation}
We claim that even under condition \eqref{aim},   we will have $KT>T^*$. This  clearly lead to a contradiction to the definition of $T^*.$ \\

\noindent
In view of the construction of $\phi_k$ and Corollary \ref{winfty2}, we note that $\phi_k \in L^2$ for $k\in \{0, 1,\cdots, K-1\}.$ Now exploiting the conservation \eqref{mass} and Corollary \ref{winfty2} (for  $w=w_{k}$ and $\psi=e^{ikT\Delta}\psi_{0}, 0\leq k \leq K-1$), we have
    \begin{align}
        \|\phi_{K}\|_{L^{2}}  &\leq \|v_{K-1}\|_{L^{\infty}_{[(K-1)T,KT]}L^{2}}+\|w_{K-1}\|_{L^{\infty}_{[(K-1)T,KT]}L^{2}} \nonumber\\
        &= \|\phi_{K-1}\|_{L^2} + \|w_{K-1}\|_{L^{\infty}_{[(K-1)T,KT]}L^2}\nonumber\\
        & \leq \|v_{K-2}\|_{L^{\infty}_{[(K-2)T,(K-1)T]}L^{2}}+\|w_{K-2}\|_{L^{\infty}_{[(K-2)T,(K-1)T]}L^{2}} +\|w_{K-1}\|_{L^{\infty}_{[(K-1)T,KT]}L^{2}} \nonumber\\
        &= \|\phi_{K-2}\|_{L^2}+\|w_{K-2}\|_{L^{\infty}_{[(K-2)T,(K-1)T]}L^{2}} +\|w_{K-1}\|_{L^{\infty}_{[(K-1)T,KT]}L^{2}} \nonumber\\
        & \leq \cdots \leq  \|\phi_{0}\|_{L^{2}}+\sum_{k=0}^{K-1}\|w_{k}\|_{L^{\infty}_{[kT,(k+1)T]}L^{2}} \nonumber \\
        & \lesssim_{n, \alpha,b} CN^{\beta}+T^{\frac{n\alpha}{4(\alpha+2)}}\sum_{k=0}^{K-1}\|e^{ikT\Delta}\psi_{0}\|_{M^{\alpha+2,(\alpha+2)'}}\nonumber\\
        &\lesssim_{n,T^{*}} CN^{\beta}+T^{\frac{n\alpha}{4(\alpha+2)}}K\frac{C}{N}\label{secondest}.
    \end{align}
    In the last two inequalities, we have used \eqref{asi} and \eqref{ref1}.
   Thus, using \eqref{TN}, \eqref{aim} can be expressed as
 \begin{align}
     KT &\gtrsim_{n,\alpha,b,T^*} N^{1+\beta}T^{1-\frac{n\alpha}{4(\alpha+2)}} \approx N^{1+\beta \left(1-\frac{\alpha\left(1-\frac{n\alpha}{4(\alpha+2)}\right)}{\frac{4-2b-n\alpha}{4}-\frac{n(4-2b-n\alpha)}{4(\alpha+2)(n+2-b)}}\right)}\nonumber\\
    &\label{Npower}= N^{1-\beta \left(-1+\frac{\alpha\left(1-\frac{n\alpha}{4(\alpha+2)}\right)}{\frac{4-2b-n\alpha}{4}-\frac{n(4-2b-n\alpha)}{4(\alpha+2)(n+2-b)}}\right)}.
\end{align}
Note that $N$ can be taken arbitrarily large. For any $\beta$ satisfying 
\begin{align}\label{betarange}
0 < \beta <
\begin{cases}
 \eta \quad &\text{if}\quad  \alpha-\frac{n\alpha^2}{4(\alpha+2)}-\frac{4-2b-n\alpha}{4}+\frac{n(4-2b-n\alpha)}{4(\alpha+2)(n+2-b)}>0\\
\infty \quad &\text{otherwise} 
\end{cases}
\end{align}
where
$$\eta =\frac{\frac{4-2b-n\alpha}{4}-\frac{n(4-2b-n\alpha)}{4(\alpha+2)(n+2-b)}}{\alpha-\frac{n\alpha^2}{4(\alpha+2)}-\frac{4-2b-n\alpha}{4}+\frac{n(4-2b-n\alpha)}{4(\alpha+2)(n+2-b)}}~,$$
the exponent of $N$ is positive in \eqref{Npower}, we get $KT>T^*$. This concludes the proof of Theorem \ref{gwp}.  
\end{proof}
\begin{Remark}\label{Whypso}
\begin{enumerate}
    \item Recall $\beta$ defined in terms of $p$ in  \eqref{betap}.  The range of $\beta$ in \eqref{betarange} in turn decides the range of $p.$ Thus, we have $p \in (2,p_{max})$  and 
     \begin{align*}p_{\max}=
         \begin{cases}
            \frac{4\alpha+8-n\alpha}{2\alpha+2+b+\frac{n(4-2b-n\alpha)}{2(\alpha+2)(n+2-b)}}  &\text{if}\quad \beta = \eta \\
             \alpha+2 &\text{if}\quad \beta=\infty. \\
         \end{cases}
     \end{align*}
      This justifies the choice of \(p\)  in Theorem \ref{gwp}.
      \item It is easy to verify that $2<p_{max}=\frac{2}{1-\frac{4\zeta}{4\alpha+8-n\alpha}}$ when $\beta=\eta$ and $\zeta=\frac{4-2b-n\alpha}{4}-\frac{n(4-2b-n\alpha)}{4(\alpha+2)(n+2-b)}$, since $0< \frac{4\zeta}{4\alpha+8-n\alpha}<1$ by our assumption on $b$ and $\alpha.$ 
\end{enumerate}

\end{Remark}
We shall now prove Proposition \ref{w0exist} and then Corollary \ref{winfty2}. 
\begin{proof}[\textbf{Proof of Proposition \ref{w0exist}}] 
Recall $Y(T)$ defined in \eqref{YT}.
     Define $$B(A,T)=\{u\in Y(T):\|u\|_{Y(T)}\leq A\}$$
    such that $w \in B(A,T)$ with $A>0$ (to be choosen later), $T$ be the minimum of the right-hand sides of the conditions \eqref{c1},  \eqref{c2} and \eqref{c3}(w.l.o.g. we may assume).
    Further, define
    $$\Gamma(w):= i \mu \int_{0}^{t} e^{i(t-\tau)\Delta}|x|^{-b} G(v + e^{it\Delta}\psi, w) \, d\tau +  i \mu  \int_{0}^{t} e^{i(t-\tau)\Delta}|x|^{-b} G(v, e^{it\Delta}\psi) \, d\tau. $$
    Firstly, we need to show that $\Gamma(w)\in B(A,T)$.\\
   Using Theorem \ref{SE1}, Lemma \ref{eg} and Lemma \ref{lemlwp} under the assumption \eqref{c1}, for any $v_{1},v_{2} \in \C,$ we have 
    \begin{align}
    \left\| \int_{0}^{t} e^{i(t-\tau)\Delta}|x|^{-b}G(v_{1},v_{2})(\tau) \, d\tau \right\|_{Y(T)}
   %\lesssim_{n,\alpha} \inf_{(\gamma,\rho)\in \mathcal{A}} &\|~|x|^{-b}|v_{1}|^{\alpha}v_{2} \|_{L^{\gamma'}_{T}L^{\rho'}} \nonumber \\
  % &+ \inf_{(\gamma,\rho)\in \mathcal{A}} \|~|x|^{-b}|v_{2}|^{\alpha+1} \|_{L^{\gamma'}_{T}L^{\rho'}} \nonumber \\
   \lesssim_{n,\alpha,b} & \||v_{1}|^{\alpha}v_{2} \|_{L^{\gamma'_{1}}_{T}L^{\rho'_{1}}(B^{c})}  
   +  \|~|x|^{-b}|v_{1}|^{\alpha}v_{2} \|_{L^{\gamma'_{2}}_{T}L^{\rho'_{2}}(B)}   \nonumber\\ 
   +& \||v_{2}|^{\alpha+1} \|_{L^{\gamma'_{1}}_{T}L^{\rho'_{1}}(B^{c})} 
   + \|~|x|^{-b}|v_{2}|^{\alpha+1} \|_{L^{\gamma'_{2}}_{T}L^{\rho'_{2}}(B)} 
  \end{align}
  \begin{align}
   \lesssim\label{before4.10} & \left(T^{\frac{4 - 2b - n\alpha}{4} - \frac{n(4 - 2b - n\alpha)}{4(\alpha + 2)(n + 2 - b)}} \right) 
     \left(\|v_{1}\|^{\alpha}_{Y(T)} \|v_{2}\|_{Y(T)}+
     \|v_{2}\|^{\alpha + 1}_{Y(T)}\right)  \\
   \label{4.10}\lesssim_{\alpha} & \left( T^{\frac{4 - 2b - n\alpha}{4} - \frac{n(4 - 2b - n\alpha)}{4(\alpha + 2)(n + 2 - b)} + \frac{n\alpha}{4(\alpha+2)}} \right) 
     \|v_{1}\|^{\alpha}_{Y(T)} \|v_{2}\|_{L^{\infty}_{T}L^{\alpha+2}} \nonumber \\
   & + \left( T^{\frac{4 - 2b - n\alpha}{4} - \frac{n(4 - 2b - n\alpha)}{4(\alpha + 2)(n + 2 - b)} + \frac{n\alpha(\alpha + 1)}{4(\alpha+2)}} \right) 
     \|v_{2}\|^{\alpha + 1}_{L^{\infty}_{T}L^{\alpha+2}}.
\end{align}
In the last inequality, we have used the embedding $L^{\infty}_{T} \hookrightarrow L^{\frac{4(\alpha+2)}{n\alpha}}_{T}$ i.e. $\|\cdot\|_{L^{\frac{4(\alpha+2)}{n\alpha}}_{T}} \leq T^{\frac{n\alpha}{4(\alpha+2)}} \|\cdot\|_{L^{\infty}_{T}}.$
\\ Using the estimate \eqref{4.10} for $v_{1}=v , v_{2}=e^{i\tau \Delta}\psi$ along with \eqref{v0qr2}, Lemma \ref{srp} \eqref{embedding} and Proposition \ref{mpq} under the assumption \eqref{c1}, we obtain\\
    \begin{align*} \hspace{-7cm}\left|\left| \displaystyle\int_{0}^{t} e^{i(t-\tau)\Delta} |x|^{-b}G(v,e^{i\tau \Delta}\psi)(\tau) \, d\tau \right|\right|_{Y(T)} 
    \end{align*}
    \begin{align*}
   \lesssim_{\alpha,n,b}& \left( T^{\frac{4-2b-n\alpha}{4} - \frac{n(4-2b-n\alpha)}{4(\alpha+2)(n+2-b)} + \frac{n\alpha}{4(\alpha+2)}} \right) \|v\|_{Y(T)}^{\alpha}  \| e^{i\tau \Delta}\psi \|_{L^{\infty}_{T}L^{\alpha+2}} \\
     &+ \left( T^{\frac{4-2b-n\alpha}{4} - \frac{n(4-2b-n\alpha)}{4(\alpha+2)(n+2-b)} + \frac{n\alpha(\alpha+1)}{4(\alpha+2)}} \right) \| e^{i\tau \Delta}\psi \|_{L^{\infty}_{T}, 
     L^{\alpha+2}}^{\alpha+1} \\
     \lesssim_{\alpha,n} & \left( T^{\frac{4-2b-n\alpha}{4} - \frac{n(4-2b-n\alpha)}{4(\alpha+2)(n+2-b)} + \frac{n\alpha}{4(\alpha+2)}} \right) \| \phi \|_{L^{2}}^{\alpha}  \| \psi\|_{M^{\alpha+2, (\alpha+2)'}} \\
      &+\left( T^{\frac{4-2b-n\alpha}{4} - \frac{n(4-2b-n\alpha)}{4(\alpha+2)(n+2-b)} + \frac{n\alpha(\alpha+1)}{4(\alpha+2)}} \right) \| \psi \|_{M^{\alpha+2, (\alpha+2)'}}^{\alpha+1} \\
      = &T^{\frac{n\alpha}{4(\alpha+2)}} \| \psi \|_{M^{\alpha+2, (\alpha+2)'}}\left( T^{\frac{4-2b-n\alpha}{4} - \frac{n(4-2b-n\alpha)}{4(\alpha+2)(n+2-b)}} \right. \| \phi \|_{L^{2}}^{\alpha}   \\
      &+\left. T^{\frac{4-2b-n\alpha}{4} - \frac{n(4-2b-n\alpha)}{4(\alpha+2)(n+2-b)} + \frac{n\alpha^{2}}{4(\alpha+2)}}\| \psi \|_{M^{\alpha+2, (\alpha+2)'}}^{\alpha}  \right) \\
     \lesssim_{\alpha,n,b} &   T^{\frac{n\alpha}{4(\alpha+2)}} \| \psi \|_{M^{\alpha+2, (\alpha+2)'}}.
\end{align*}
The last inequality follows due to our assumptions \eqref{c2} and \eqref{c3}.
This suggests the choice of
    \begin{equation}\label{AA}
    A= \frac{3}{C(n,\alpha,b)}T^{\frac{n\alpha}{4(\alpha+2)}}\|\psi\|_{M^{\alpha+2,(\alpha+2)'}}.
    \end{equation}
    where $C=C(n,\alpha,b)$ is the same constant as in \eqref{c2} and \eqref{c3}, choosen such that\begin{equation}\label{gammaw1}
      \left|\left| \displaystyle\int_{0}^{t} e^{i(t-\tau)\Delta} |x|^{-b}G(v,e^{i\tau \Delta}\psi)(\tau) \, d\tau \right|\right|_{Y(T)}\leq \frac{A}{3}
    \end{equation}
    holds.
     Using the estimate \eqref{before4.10} for $v_{1}=v + e^{i\tau \Delta}\psi$ and $ v_{2}=w$ along with \eqref{v0qr2}, Lemma \ref{srp} \eqref{embedding}  and Proposition \ref{mpq} under the assumption \eqref{c1}, we have
\begin{align*}
    \hspace{-7cm} \left\| \int_{0}^{t}  e^{i(t-\tau)\Delta} |x|^{-b} G(v +  e^{i\tau \Delta}\psi, w)(\tau) \, d\tau \right\|_{Y(T)} 
\end{align*}
\begin{align*}
    & \lesssim_{\alpha,n,b} T^{\frac{4-2b-n\alpha}{4} - \frac{n(4-2b-n\alpha)}{4(\alpha+2)(n+2-b)}}   \left( \|v
    + e^{i\tau \Delta}\psi \|_{Y(T)}^{\alpha} \|w\|_{Y(T)} + \|w\|_{Y(T)}^{\alpha+1} \right) \\
    & \lesssim_{\alpha,n}  \|w\|_{Y(T)}\left\{T^{\frac{4-2b-n\alpha}{4} - \frac{n(4-2b-n\alpha)}{4(\alpha+2)(n+2-b)}}   \left( \left( \|\phi\|_{L^{2}} + \|\psi\|_{M^{\alpha+2, (\alpha+2)'}} \right)^{\alpha} +\|w\|_{Y(T)}^{\alpha} \right)\right\} \\
    & \lesssim_{\alpha,n,b}\|w\|_{Y(T)} \left\{\frac{1}{3} +T^{\frac{4-2b-n\alpha}{4} - \frac{n(4-2b-n\alpha)}{4(\alpha+2)(n+2-b)}+\frac{n\alpha^{2}}{4(\alpha+2)}}\|\psi\|^{\alpha}_{M^{\alpha+2,(\alpha+2)'}}   \right\}
\end{align*}
In the last inequality, we have used \eqref{c2} in the first summand (choosing $C$ in \eqref{c2} small enough) and substitute the norm of $w$ in $Y(T)$ to the power of $\alpha$ by $A^{\alpha}$, ($A$ given in \eqref{AA}) in the second summand. Consider second summand of last inequality under the assumption \eqref{c3} (choosing $C$ small enough in \eqref{c3}) to get
        \begin{equation}\label{gamma2}
            \left\| \int_{0}^{t} e^{i(t-\tau)\Delta} |x|^{-b} G(v + e^{i\tau \Delta}\psi, w)(\tau)\, d\tau \right\|_{Y(T)} \leq \frac{2A}{3}.
        \end{equation}
        Combining \eqref{gammaw1} and \eqref{gamma2}, we can say that $\Gamma(w)$ belongs to $B(A, T).$ Contractivity of $\Gamma$ follows similarly. Thus, by the Banach fixed-point theorem, we get a unique fixed point $w$ to the integral equation \eqref{w01} on the time-interval $[0,T].$
\end{proof}  

\begin{proof}[\textbf{Proof of Corollary \ref{winfty2}}] The proof follows from the
Strichartz estimates (Theorem \ref{SE1}) by replacing the norm defined with respect to $Y(T)$ by  $L^{\infty}_{T}L^{2}.$ Using an admissible pair $ (\infty,2)$ on the left hand side 
and the same pairs on the right hand side of \eqref{before4.10} and \eqref{4.10} in the proof of Proposition \ref{w0exist}, we obtain
\begin{align*}
\|w\|_{L^{\infty}_{T}L^{2}} \leq & \left|\left|\int_{0}^{t} e^{i(t-\tau)\Delta}|x|^{-b} G(v + e^{it\Delta}\psi, w)(\tau) \, d\tau \right|\right|_{L^{\infty}_{T}L^{2}}\\
&+\left|\left|\int_{0}^{t} e^{i(t-\tau)\Delta} |x|^{-b} G(v, e^{i\tau \Delta}\psi)(\tau) \, d\tau \right|\right|_{L^{\infty}_{T}L^{2}}\\
&\lesssim_{\alpha,n,b} T^{\frac{n\alpha}{4(\alpha+2)}}\|\psi\|_{M^{\alpha+2,(\alpha+2)'}}.
\end{align*}This completes the proof of Corollary \ref{winfty2}.
\end{proof}

{\bf Acknowledgments:} The second author acknowledges the financial support from the University Grants Commission (UGC), India (file number 201610135365) for pursuing Ph.D. program.
The second and third authors would like to express their gratitude to the Bhaskaracharya Mathematics Laboratory and the Brahmagupta Mathematics Library within the Department of Mathematics at IIT Indore, which are supported by the DST FIST Project (file number SR/FST/MS-I/2018/26). 
The third author would like to thankfully acknowledge the financial support from the Matrics Project of DST (file number 2018/001166).
\bibliographystyle{plain}
\bibliography{ msnls.bib}

\end{document}